\theoremstyle{plain}
\newtheorem{theorem}{Theorem}[section]
\newtheorem{corollary}[theorem]{Corollary}
\newtheorem{lemma}[theorem]{Lemma}
\newtheorem{proposition}[theorem]{Proposition}
\newtheorem*{claim}{Claim}
\theoremstyle{definition}
\newtheorem{definition}[theorem]{Definition}
\theoremstyle{remark}
\newcommand{\U}{\mathcal{U}}
\newcommand{\F}{\mathcal{F}}
\newcommand{\G}{\mathcal{G}}
\newcommand{\K}{\mathcal{K}}
\newcommand{\R}{\mathbb{R}}
\newcommand{\w}{\omega}
\newcommand{\C}{\mathbb{C}}
\newcommand{\ca}{c_a}
\newcommand{\cl}{\overline}
\DeclareMathOperator{\intr}{Int}
\DeclareMathOperator{\clco}{\cl{co}}
\DeclareMathOperator{\diam}{diam}
\DeclareMathOperator{\cf}{cf}
\title{Local and global properties of spaces of minimal USCO maps}
\author{Serhii Bardyla$^1$, Branislav Novotný$^2$, and Jaroslav Šupina$^3$}
\address{$^1$University of Vienna, 
Institute of Mathematics, 
%Kolingasse 14-16, 
%1090 
Vienna, Austria.}
\email{sbardyla@gmail.com}
\address{$^2$Slovak Academy of Sciences, Mathematical Institute, Bratislava, Slovakia}
\email{novotny@mat.savba.sk}
\address{$^3$P.J. Šafárik University in Košice, Institute of Mathematics, Košice, Slovakia}
\email{jaroslav.supina@upjs.sk}
\keywords{usco maps, cusco maps, compact, $\sigma$-compact, metrizable, ccc.}
\subjclass[2020]{54C60, 54C35, 54D45}
\thanks{The research of the first named author was funded in whole by the Austrian Science Fund FWF [10.55776/ESP399]. The second and third authors were funded by APVV-20-0045. The second author was also funded by VEGA 2/0048/21 and the third one by VEGA 1/0657/22.}
\begin{document}

\begin{abstract}
In this paper, we study an interplay between local and global properties of spaces of minimal usco maps equipped with the topology of uniform convergence on compact sets. In particular, for each locally compact space $X$ and metric space $Y$, we characterize the space of minimal usco maps from $X$ to $Y$, satisfying one of the following properties: (i) compact, (ii) locally compact,  (iii) $\sigma$-compact, (iv) locally $\sigma$-compact, (v) metrizable, (vi) ccc, (vii)~locally ccc, where in the last two items we additionally assumed that $Y$ is separable and non-discrete.
Some of the aforementioned results complement ones of Ľubica Holá and Dušan Holý. Also, we obtain analogical characterizations for spaces of minimal cusco maps.
\end{abstract}
\maketitle

\section{Introduction}
Minimal usco and minimal cusco maps have been intensively studied in the literature, see the monograph~\cite{Hola2021a} and references therein. Applications of minimal usco/cusco maps emerged in the analysis of differentiability of Lipschitz functions \cite{Borwein1991, Borwein1997, Moors1995, Zajicek2012}, optimization \cite{Borwein2004, Coban1989, Kenderov1993},  selection theorems \cite{Hola2009, Hola2014, Moors2002}, and investigations of weak Asplund spaces \cite{Fabian1997, Kalenda1999, Moors2006, Preiss1990, Stegall1983}. Topological properties of spaces of minimal usco/cusco maps were investigated in~\cite{Hola2009, Hola2013b, Hola2014, Hola2015b, Hola2016, Hola2021, Hola2023, Hola2008, Hola2007b, Hola2020, Hola2022a}.  
% and \cite{Hola2019, Hola2020a}.

In this paper, we continue these investigations considering, among others, local and global compact-like properties of spaces of minimal usco/cusco maps equipped with the topology of uniform convergence on compact sets. In particular, for each locally compact space $X$ and metric space $Y$, we characterize the space of minimal usco maps from $X$ to $Y$, satisfying one of the following properties: (i) compact, (ii) locally compact,  (iii) $\sigma$-compact, (iv)~locally $\sigma$-compact, (v) metrizable, (vi) ccc, (vii)~locally ccc, where in the last two items we additionally assumed that $Y$ is separable and non-discrete.
Some of the aforementioned results complement ones of Ľubica Holá and Dušan Holý. Also, we obtain analogical characterizations for spaces of minimal cusco maps.

The paper is organized as follows. In Section~\ref{prelim}, we define necessary notions and state some auxiliary facts. Section~\ref{usco} is devoted to spaces of minimal usco maps. In Section~\ref{cusco}, we discuss spaces of minimal cusco maps.

\section{Preliminaries}\label{prelim}

A {\em space} means a nonempty Hausdorff topological space. In what follows, $X, Y, Z$ are spaces. %Let $\R,\C$ denote the space of real and complex numbers, respectively. 
The symbols $\cl A$ and $\intr (A)$ stand for the closure and the interior of the set $A$. Let $\partial A=\overline{A}\setminus A$ be the boundary of $A$. 
For a space $Y$, the family of all nonempty compact subsets of $Y$ is denoted by $K(Y)$.

A map, or a multifunction, from $X$ to $Y$ is a function that assigns to each element of $X$ a subset of $Y$. %The set of points of $X$, where a map $F$ is single-valued ($|F(x)|=1$) will be denoted $S(F)$.
If $F$ is a map from $X$ to $Y$, shortly $F\colon X\to Y$, then its graph is the set  $\{(x,y) \in X \times Y: y \in F(x)\}$. In this paper, we identify both maps and functions with their graphs. %If $f\colon X \to Y$  is a single-valued function, we will use the symbol $f$ also for the graph of $f$. 
%Notice that we can consider it to be a special case of a set-valued map. 
By $\cl F$ we denote the closure of the graph of $F$ in the Tychonoff product $X\times Y$. 
%In general, it is a set-valued map.

%A map $F$ from $X$ to $Y$ is called {\em upper semicontinuous at a point} $x \in X$ if for every open set $V$ containing $F(x)$, there exists an open set $U$ such that $x \in U$ and $F(U) \subseteq V$. 
%A map$F$ is called {\em upper semicontinuous} if it is upper semicontinuous at each point of $X$. 
A map $F$ is called {\em upper semicontinuous} if for every 
$x \in X$ and open set $V$ containing $F(x)$, there exists an open set $U$ such that $x \in U$ and $F(U) \subseteq V$.
Following Christensen \cite{Christensen1982}, we say that a map $F\colon X\to Y$ is {\em usco} if it is upper semicontinuous and $F(x)$ is a nonempty compact subset of $Y$ for each $x\in X$. Finally, a map $F\colon X\to Y$ is said to be {\em minimal usco} if it is minimal with respect to the inclusion among all usco maps from $X$ to $Y$. That is, $F$ is a minimal usco map if $F$ does not contain properly any other usco map from $X$ to $Y$.
Using the Kuratowski-Zorn principle, we can guarantee that every usco map contains a minimal usco map, see \cite{Hola2021a}.
%\cite{Borwein1999,Borwein2004,Drewnowski1990}
By $MU(X,Y)$ we denote the family of all minimal usco maps from $X$ to $Y$. Note that $MU(X,Y)\subseteq K(Y)^X$.

A convenient tool to determine whether a map is minimal usco uses notions of quasicontinuity and subcontinuity of its selections. %, see \cite{Hola2023}.
A function $f\colon X \to Y$ is {\em quasicontinuous}, if for every $x \in X$ and neighborhoods $U$ of $x$ and $V$ of $f(x)$, there is a nonempty open set $G \subseteq U$ such that $f(G) \subseteq V$, see \cite{Neubrunn1988}. %If $f$ is quasicontinuous at every point of $X$, we say that $f$ is {\em quasicontinuous}.
Let $A$ be a dense subset of $X$, then a function $f\colon A \to Y$ is said to be {\em densely defined} on $X$. Moreover, $f$ is called {\em subcontinuous} on $X$, if for every net $(x_i) \subseteq A$ convergent to some $x\in X$, the net $(f(x_i))$ has a cluster point in $Y$, see \cite{Lechicki1990}.
%A function  $f\colon A \to Y$ is {\em subcontinuous} on $X$ if it is subcontinuous at every $x \in X$.
We say that a function $f\colon A\to Y$ is a {\em dense selection} of a map $F\colon X\to Y$, if $A$ is dense in $X$ and for every $x\in A$, $f(x)\in F(X)$.
The following theorem is one of several characterizations of minimal usco maps from \cite{Hola2023}.

\begin{theorem}\label{thm:minimalusco}
	Let $Y$ be regular, and $F\colon X\to Y$. Then $F\in MU(X,Y)$, if and only if $F$ possesses a dense selection $f\colon A\to Y$ that is subcontinuous on $X$, quasicontinuous on $A$ and $\cl{f}=F$.
\end{theorem}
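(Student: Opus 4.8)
The plan is to route both implications through a single \emph{minimality criterion} for usco maps, which I would first establish as a lemma: \emph{an usco map $F\colon X\to Y$ is minimal if and only if for every pair of open sets $U\subseteq X$ and $V\subseteq Y$ with $F(x)\cap V\neq\emptyset$ for some $x\in U$, there is a nonempty open $W\subseteq U$ with $F(W)\subseteq V$.} For the nontrivial direction, assume the condition fails for some $U,V$; then $D=\{x\in U: F(x)\setminus V\neq\emptyset\}$ is dense in $U$, and I would check that $G:=\cl{\{(x,y)\in F: x\notin U \text{ or } y\notin V\}}$ is a proper usco submap of $F$: its values are nonempty precisely because $D$ is dense, it is usco as a closed-graph submap of an usco map, and it omits a point $(x_0,y_0)$ with $y_0\in F(x_0)\cap V$, so $G\neq F$, contradicting minimality. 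The converse uses upper semicontinuity of a hypothetical proper submap together with regularity of $Y$ to separate off an omitted value.

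For the implication $(\Leftarrow)$, suppose $f\colon A\to Y$ is a dense selection that is subcontinuous on $X$ and quasicontinuous on $A$, and put $F=\cl f$. I would first argue that $F$ is usco: subcontinuity, via an iterated-limit argument, yields that each $F(x)=\bigcap\{\cl{f(U\cap A)}: U\ni x \text{ open}\}$ is nonempty and compact, and upper semicontinuity then follows from the closed graph together with regularity. Indeed, if $F(x)\subseteq V$ but $F$ were not u.s.c.\ at $x$, I would choose by regularity an open $V'$ with $F(x)\subseteq V'\subseteq\cl{V'}\subseteq V$, produce a net $a_i\to x$ in $A$ with $f(a_i)\notin\cl{V'}$, and let subcontinuity furnish a cluster point $y$ of $(f(a_i))$; then $y\in F(x)\subseteq V'$ while $y\notin V'$, a contradiction. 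To see that $F$ is \emph{minimal}, I would verify the criterion: given open $U,V$ and $x_0\in U$ with $y_0\in F(x_0)\cap V$, shrink by regularity to $V'$ with $y_0\in V'\subseteq\cl{V'}\subseteq V$; since $(x_0,y_0)\in\cl f$ there is $a\in A\cap U$ with $f(a)\in V'$, and quasicontinuity of $f$ at $a$ provides a nonempty open $O\subseteq U$ with $f(O\cap A)\subseteq V'$, whence $F(O)\subseteq\cl{f(O\cap A)}\subseteq\cl{V'}\subseteq V$. This is exactly where regularity is indispensable, converting the open control given by quasicontinuity into the closed control needed for $F(O)\subseteq V$.

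For the implication $(\Rightarrow)$, suppose $F\in MU(X,Y)$. The key observation is that the problem reduces to producing a \emph{quasicontinuous} dense selection: any dense selection $f$ of an usco map is automatically subcontinuous on $X$ (if $a_i\to x$ with $f(a_i)\in F(a_i)$, upper semicontinuity forces $(f(a_i))$ eventually into every neighborhood of the compact set $F(x)$, so it clusters in $F(x)$), and once $f$ is subcontinuous and quasicontinuous the map $\cl f$ is usco by the previous paragraph and satisfies $\cl f\subseteq F$, which minimality upgrades to $\cl f=F$. Thus it remains to construct a dense set $A$ and a selection $f\colon A\to Y$, $f(a)\in F(a)$, that is quasicontinuous on $A$. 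Here I would use the minimality criterion as a refinement tool: starting from any basic open set, the criterion together with regularity of $Y$ lets me pass to a nonempty open subset on which all values of $F$ lie in a prescribed neighborhood, and iterating this produces, at chosen points, neighborhood bases witnessing quasicontinuity. Organizing these choices---by a Zorn's-lemma argument on partial self-witnessing quasicontinuous selections, or by transfinite recursion along a base of $X$---yields a maximal such selection whose domain $A$ must be dense, since otherwise the construction could be continued on a missed open set.

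I expect the main obstacle to be precisely this last construction in the $(\Rightarrow)$ direction. The difficulty is that quasicontinuity is not a pointwise-local property of the values alone but depends on the domain $A$ through which the witnessing open sets are taken, so one must ensure that the witnesses for quasicontinuity at each $a\in A$ remain available inside $A$ once the construction closes off, and that this is compatible with taking unions of chains in the Zorn argument. Managing this bookkeeping using only regularity of $Y$---rather than a metric, where one could simply drive oscillations to zero---is the delicate point; everything else reduces to the minimality criterion and the closed-graph and subcontinuity lemmas above.
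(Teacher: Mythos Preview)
The paper does not prove this theorem: it is quoted verbatim as ``one of several characterizations of minimal usco maps from \cite{Hola2023}'' and used as a black box. So there is no in-paper proof to compare against; I can only comment on your proposal on its own merits.

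Your overall plan---route both directions through the standard minimality criterion (``$F$ is minimal usco iff for all open $U\subseteq X$, $V\subseteq Y$ with $F\cap(U\times V)\neq\emptyset$ there is nonempty open $W\subseteq U$ with $F(W)\subseteq V$'')---is exactly the classical approach, and your $(\Leftarrow)$ argument is correct, including the use of regularity to pass from $f(O\cap A)\subseteq V'$ to $F(O)\subseteq\cl{V'}\subseteq V$.

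However, your $(\Rightarrow)$ direction is massively overcomplicated, and the part you flag as ``the main obstacle'' is in fact trivial. No Zorn argument, no transfinite recursion, no bookkeeping is needed: take $A=X$ and let $f$ be \emph{any} selection of $F$. Then $f$ is automatically quasicontinuous on $X$, directly from the minimality criterion you already stated. Indeed, given $x\in X$ and open $U\ni x$, $V\ni f(x)$, we have $f(x)\in F(x)\cap V$, so the criterion hands you a nonempty open $W\subseteq U$ with $F(W)\subseteq V$, whence $f(W)\subseteq F(W)\subseteq V$. Subcontinuity of $f$ you already observed, and $\cl f=F$ then follows exactly as you say (closed graph of $F$ gives $\cl f\subseteq F$; $\cl f$ is usco by the $(\Leftarrow)$ reasoning; minimality forces equality). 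So the ``delicate point'' you anticipated does not exist: the minimality criterion is not merely a refinement tool for a construction, it \emph{is} quasicontinuity of every selection.
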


We will use the following notation of special maps throughout this paper.
\begin{definition}
For an open $U\subseteq X$ and $a,b\in Y$ define $f^{a,b}_U\colon X\setminus \partial U\to Y$ by
\[
f^{a,b}_U(x)=\begin{cases}
            a, & \mbox{if } x\in U, \\
            b, & \mbox{if } x\not\in\cl{U},
          \end{cases}
\]
Note that the function $f^{a,b}_U$ is densely defined on $X$, it is continuous on $X\setminus \partial U$ and subcontinuous on $X$, thus by Theorem \ref{thm:minimalusco} the map $F^{a,b}_U=\cl{f^{a,b}_U}\in MU(X,Y)$.
\end{definition}

By $H_d$ we denote Hausdorff metric on $K(Y)$ (see~\cite{Beer1993} for more details). Next, following~\cite{Hammer1997}, we shall define the topology $\tau_{uc}$ of {\em uniform convergence on compact sets} on $K(Y)^X$.

\begin{definition}\label{def:tauuc}
The topology $\tau_{uc}$ on the set $K(Y)^X$ is induced by the uniformity $\U_{uc}$, which has the following base
    \[\{W[K,\varepsilon] : K\text{ is compact and }\varepsilon>0\},\]
    where
    \[W[K,\varepsilon]=\{(F ,G) \in K(Y)^X\times K(Y)^X: H_d(F (x),G(x))<\varepsilon \text{, for every } x\in K\}.\]
    The general $\tau_{uc}$-basic neighborhood of $F\in K(Y)^X$ will be denoted by $W(F,K,\varepsilon)$, i.e., $W(F,K,\varepsilon)=W[K,\varepsilon](F)$.
\end{definition}

In this paper, the set $MU(X,Y)$ will be, by default, equipped with the subspace topology inherited from $(K(Y)^X,\tau_{uc})$.

\begin{definition}
Let $X$ be a topological space and $(Y,d)$ be a metric space. A family $\F\subseteq K(Y)^X$ is called 
\begin{itemize}
    \item[(i)] {\em densely equicontinuous at} $x\in X$, if for every $\varepsilon>0$ there is a finite family $\U$ of open subsets of $X$ such that $x\in\intr\cl{\bigcup \U}$ and for every $F\in\F$ and $U\in\U$, $\diam F(U) <\varepsilon$;
    \item[(ii)] {\em densely equicontinuous}, if $\F$ is densely equicontinuous at every $x\in X$;
    \item[(iii)] {\em pointwise bounded}, if for every $x\in X$,  the set $\bigcup\{F(x):F\in\F\}$ is relatively compact, i.e., its closure in $Y$ is compact.
\end{itemize}
\end{definition}

The following analogue of Arzela–Ascoli theorem for minimal usco maps (see \cite[Theorem 3.1]{Hola2016}) is crucial for this paper.

\begin{theorem}[Hol\'a--Hol\'y]\label{thm:arzel}
  Let $X$ be a locally compact topological space and $(Y, d)$ be a metric space. A subset $\F\subseteq MU(X,Y)$ is compact if and only if $\F$ is closed, densely equicontinuous, and pointwise bounded.
\end{theorem}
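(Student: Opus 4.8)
The plan is to follow the pattern of the classical Arzela--Ascoli theorem, but working with the Hausdorff metric on $K(Y)$ and exploiting the structural description of minimal usco maps from Theorem~\ref{thm:minimalusco}. I would prove the two implications separately; in each direction the roles of \emph{pointwise boundedness} and \emph{dense equicontinuity} play the parts of pointwise relative compactness and equicontinuity in the classical function case.

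For necessity, suppose $\F$ is compact. Since $(K(Y)^X,\tau_{uc})$ is Hausdorff (the Hausdorff metric separates distinct values), every compact subset is closed, giving closedness. For each fixed $x\in X$ the singleton $\{x\}$ is compact, so by Definition~\ref{def:tauuc} the evaluation $e_x\colon(\F,\tau_{uc})\to(K(Y),H_d)$, $e_x(F)=F(x)$, is uniformly continuous; hence $e_x(\F)$ is a compact subset of $(K(Y),H_d)$, and the union of a compact family of compacta is compact, which yields pointwise boundedness. Dense equicontinuity is the delicate half: here I would argue by contradiction from the total boundedness of the compact set $\F$, producing, at a point and scale where equicontinuity fails, a net of maps whose values oscillate by a fixed amount on \emph{every} admissible finite family $\U$ of open sets, and deriving a contradiction with the fact that finitely many $\tau_{uc}$-balls cover $\F$.

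For sufficiency I would use the two-step Ascoli scheme. First, pointwise boundedness places $\F$ inside $\prod_{x\in X}K(C_x)$, where $C_x=\cl{\bigcup\{F(x):F\in\F\}}$ is compact; since $K(C_x)$ is compact in the Hausdorff metric, Tychonoff's theorem makes this product compact in the topology $\tau_p$ of pointwise convergence, so the $\tau_p$-closure of $\F$ is $\tau_p$-compact. Second, I would establish an Ascoli-type lemma: on a densely equicontinuous family the topology $\tau_p$ of pointwise convergence and the topology $\tau_{uc}$ coincide. Granting this, the $\tau_p$-closure of $\F$ is also $\tau_{uc}$-compact; as $\F$ is $\tau_{uc}$-closed and contained in this compact set, $\F$ itself is compact. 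Two items require genuine care: (a) one must check that the maps arising in the closure are again minimal usco, which is where Theorem~\ref{thm:minimalusco} enters, a limit map being exhibited as the graph-closure of a quasicontinuous, subcontinuous dense selection; and (b) the Ascoli lemma itself.

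The main obstacle I expect is the Ascoli lemma, and specifically the passage from control of $\diam F(U)$ over a finite family $\U$ of open sets to control of the single value $F(x)$ at a point $x$ lying only in $\intr\cl{\bigcup\U}$ rather than in some $U\in\U$. This is exactly where mere equicontinuity must be upgraded to \emph{dense} equicontinuity and where minimality is indispensable: writing $F=\cl{f}$ for a dense selection $f$ on a dense $A$, any $y\in F(x)$ is a limit of values $f(a)$ with $a\to x$, and choosing an open $O\ni x$ with $O\subseteq\cl{\bigcup\U}$ forces these $a$ to accumulate on $\bigcup\U$, so that $F(x)$ is squeezed into an $\varepsilon$-neighborhood of $\bigcup_{U\in\U}F(U)$. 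Making this squeezing uniform over $F\in\F$, and combining it with pointwise convergence at finitely many test points chosen from the sets $U$ after covering a compact $K$ by finitely many sets $\intr\cl{\bigcup\U_{x_j}}$, is the crux of the estimate; the remainder is a routine $\varepsilon/3$-type argument.
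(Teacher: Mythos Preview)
The paper does not supply a proof of this statement: it is quoted as Theorem~3.1 of \cite{Hola2016} (Hol\'a--Hol\'y) and used as a black box. There is therefore nothing in the present paper to compare your proposal against.

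On the merits of your plan: the overall architecture is the right one, and your identification of the crux (passing from control of $\diam F(U)$ on the open sets $U\in\U$ to control of $F(x)$ for $x\in\intr\cl{\bigcup\U}$, using a dense selection and minimality) is exactly the point where dense equicontinuity earns its name. Two places deserve more care than you give them. First, in the necessity direction your argument for dense equicontinuity is only a gesture (``contradiction from total boundedness''); unlike classical equicontinuity, producing the \emph{finite family} $\U$ of open sets with $x\in\intr\cl{\bigcup\U}$ requires using the structure of minimal usco maps, not just a metric covering argument, and you should expect to invoke a selection-type description there as well. Second, in the sufficiency direction you take the $\tau_p$-closure inside $\prod_x K(C_x)$ and then want the limit to lie in $MU(X,Y)$; the paper records (just after Lemma~\ref{lc}) that $MU(X,Y)$ is $\tau_{uc}$-closed in $K(Y)^X$ via \cite[Lemma~3.3]{Hola2016}, but that does not immediately give $\tau_p$-closedness, so your step~(a) is genuine work and should precede the Ascoli lemma: you need to show that a $\tau_p$-limit of a densely equicontinuous, pointwise bounded family of minimal usco maps is again minimal usco (and that the enlarged family remains densely equicontinuous). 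Once that is in hand, your $\varepsilon/3$ scheme for $\tau_p=\tau_{uc}$ goes through.
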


We shall use the following trivial fact a few times in sections~\ref{usco} and~\ref{cusco}.

\begin{lemma}\label{lc}
Let $X$ be a locally compact space and $K$ a compact subset of $X$. Then there exists an open set $U\subseteq X$ such that $K\subseteq U$ and $\overline{U}$ is compact.   
\end{lemma}

%As we mentioned in the introduction, Theorem \ref{thm:arzel} plays a key role in our results. 
Further {\em $X$ will always be a locally compact Hausdorff space, and $Y$ a metric space equipped with a metric $d$}. Note that under these assumptions, by \cite[Lemma~3.3]{Hola2016}, $MU(X,Y)$ is a closed subset of $(K(Y)^X,\tau_{uc})$.

For every $a\in Y$, denote a constant map with value $\{a\}$ as $\ca$. The following lemmas are trivial, so we omit their proofs.

\begin{lemma}\label{trivial1}
The function $\phi\colon Y\rightarrow MU(X,Y)$ defined by $\phi(a)=\ca$ is an isometry. Moreover, $\phi(Y)$ is closed in $MU(X,Y)$.  
\end{lemma}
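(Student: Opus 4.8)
The plan is to reduce both assertions to two elementary facts about the Hausdorff metric on singletons: $H_d(\{a\},\{b\})=d(a,b)$ and, more generally, $H_d(\{a\},C)=\sup_{c\in C}d(a,c)$ for $C\in K(Y)$. First I would record that each constant singleton map $\ca$ is indeed minimal usco (its values are nonempty and compact, it is trivially upper semicontinuous, and since $\ca(x)=\{a\}$ is already a singleton, no usco map can be properly contained in it), so $\phi$ really maps $Y$ into $MU(X,Y)$.

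For the isometry, the key observation is that $H_d(\ca(x),c_b(x))=H_d(\{a\},\{b\})=d(a,b)$ \emph{independently of} $x\in X$. Hence for every nonempty compact $K\subseteq X$ and every $\varepsilon>0$ one has $\sup_{x\in K}H_d(\ca(x),c_b(x))=d(a,b)$, so that $(\ca,c_b)\in W[K,\varepsilon]$ if and only if $d(a,b)<\varepsilon$. Since $X$ is nonempty, this shows simultaneously that $\phi$ is injective and that the uniformity $\U_{uc}$ restricted to $\phi(Y)$ has exactly the gauge $d$; in other words $\phi$ is an isometry onto its image (every defining pseudometric $F,G\mapsto \sup_{x\in K}H_d(F(x),G(x))$ restricts to $d$ on $\phi(Y)$).

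For closedness I would argue with nets. Let $(c_{a_i})$ be a net in $\phi(Y)$ converging in $\tau_{uc}$ to some $F\in MU(X,Y)$; I must show $F\in\phi(Y)$. Testing convergence on the compact singleton $\{x\}$ gives, for every $x\in X$, $H_d(\{a_i\},F(x))\to 0$. By the formula $H_d(\{a_i\},F(x))=\sup_{c\in F(x)}d(a_i,c)$, this forces $\diam F(x)\le 2H_d(\{a_i\},F(x))\to 0$, so each $F(x)$ is a singleton, say $F(x)=\{g(x)\}$, and moreover $a_i\to g(x)$ in $Y$. Fixing one point $x_0$ and using uniqueness of net limits in the metric space $Y$, the common limit $a:=\lim a_i$ satisfies $g(x)=a$ for every $x$, whence $F=\ca\in\phi(Y)$.

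The content here is genuinely light, as the lemma's label suggests; the only points demanding a little care are interpreting \textbf{``isometry''} correctly when $\tau_{uc}$ is given by a family of pseudometrics rather than a single metric (resolved by the $x$-independence of $H_d(\{a\},\{b\})$), and, in the closedness argument, extracting \emph{constancy} of the limit map---which comes precisely from the uniqueness of the limit of the single net $(a_i)$ across all coordinates $x$.
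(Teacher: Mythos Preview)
Your argument is correct. The paper itself omits the proof entirely, declaring the lemma ``trivial'', so there is nothing to compare your approach against; your write-up simply supplies the routine details the authors chose to suppress. The two points you flag as needing care---the meaning of ``isometry'' when $\tau_{uc}$ is given by a uniformity rather than a single metric, and the extraction of constancy of the limit map---are handled cleanly: the $x$-independence of $H_d(\{a\},\{b\})$ collapses every pseudometric $\sup_{x\in K}H_d(\cdot,\cdot)$ to $d$ on $\phi(Y)$, and uniqueness of net limits in the Hausdorff space $Y$ forces $g(x)=g(x_0)$ for all $x$. One could equally well prove closedness by exhibiting, for each $F\notin\phi(Y)$, an explicit basic neighborhood $W(F,K,\varepsilon)$ missing $\phi(Y)$ (splitting into the cases ``some $F(x)$ is not a singleton'' and ``$F$ is single-valued but non-constant''), but your net argument is just as short.
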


\begin{lemma}\label{trivial}
If $X$ is discrete, then $MU(X,Y)=Y^X$.
\end{lemma}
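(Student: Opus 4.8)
The plan is to verify the two set inclusions, after fixing the natural identification of a function $f\in Y^X$ with the singleton-valued map $x\mapsto\{f(x)\}$, under which $Y^X\subseteq K(Y)^X$. The whole point is that discreteness of $X$ trivializes both the continuity condition and the minimality condition, reducing everything to pointwise checks.

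First I would check $Y^X\subseteq MU(X,Y)$. Fix $f\in Y^X$ and let $F$ be the associated singleton-valued map. Since $X$ is discrete, upper semicontinuity is automatic: for any $x$ and any open $V\supseteq F(x)$ the singleton $U=\{x\}$ is open and $F(U)=F(x)\subseteq V$. As each $F(x)=\{f(x)\}$ is nonempty and compact, $F$ is usco. Minimality is then immediate: if $G\colon X\to Y$ is usco with $G\subseteq F$, then $\emptyset\neq G(x)\subseteq\{f(x)\}$ forces $G(x)=F(x)$ for every $x$, so $G=F$.

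For the reverse inclusion $MU(X,Y)\subseteq Y^X$ I would argue by contradiction. Suppose $F\in MU(X,Y)$ and $F(x_0)$ contains two distinct points for some $x_0$. Choosing $a\in F(x_0)$ and setting $G(x_0)=\{a\}$ and $G(x)=F(x)$ for $x\neq x_0$ yields, by the same discreteness argument, a usco map $G\subsetneq F$, contradicting minimality. Hence every value $F(x)$ is a singleton and $F\in Y^X$.

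I do not expect any real obstacle here; the only facts worth recording are that in a discrete space every singleton is open, so both upper semicontinuity and the verification of minimality collapse to pointwise statements. Equivalently, one could simply invoke Theorem~\ref{thm:minimalusco}: the only dense subset of a discrete $X$ is $X$ itself, every $f\colon X\to Y$ is quasicontinuous and subcontinuous, and its graph is already closed in $X\times Y$, so $\cl f=f$ is exactly the singleton-valued map associated with $f$. Either route confirms that the authors are right to call this lemma trivial.
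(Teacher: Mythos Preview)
Your argument is correct. The paper does not actually give a proof of this lemma (it states ``The following lemmas are trivial, so we omit their proofs''), so there is nothing to compare against beyond noting that your direct pointwise verification is precisely the sort of routine check the authors had in mind.
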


%One can check that identifying $a$ with $c_a$ we embed $Y$ into $MU(X,Y)$ as a closed subset. Also, if $X$ is discrete, then it is easy to check that $MU(X,Y)$ coincides with the Tychonoff product $Y^X$.

\section{Minimal usco maps}\label{usco}

Recall that we, by default, assume that $X$ is a locally compact space and $(Y,d)$ a metric space. We start with the following simple facts necessary for the proofs of lemmas~\ref{lem:local} and~\ref{lem:global}, which are vital for this paper.

\begin{lemma}\label{lem:useful}
Let $X$ be such that $X=K\cup D$ for some compact set $K$ and discrete set $D$. Then there exists a compact open set $K'\supseteq K$. In particular, $X$ is a topological sum of a compact space $K'$ and a discrete space $X\setminus K'$.
\end{lemma}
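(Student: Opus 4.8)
The plan is to produce $K'$ as the closure of a suitable open set and then to show that this closure is already open. First I would invoke Lemma~\ref{lc} to obtain an open set $U\subseteq X$ with $K\subseteq U$ and $\overline{U}$ compact. Setting $K'=\overline{U}$ will immediately give compactness and the inclusion $K\subseteq K'$; the entire content of the lemma is then to verify that $\overline{U}$ is open, equivalently that $\partial U=\overline{U}\setminus U$ contributes nothing beyond isolated points.

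The key step is to show that every point of $\partial U$ is isolated in $X$. Since $K$ is compact and $X$ is Hausdorff, $K$ is closed, so $X\setminus K$ is open in $X$. As $X=K\cup D$, we have $X\setminus K\subseteq D$, and by transitivity of the subspace topology the discreteness of $D$ forces $X\setminus K$ to be a discrete subspace. An open discrete subspace consists of points that are isolated in the ambient space, so every point of $X\setminus K$ is isolated in $X$. Because $K\subseteq U$, we have $\partial U\subseteq X\setminus U\subseteq X\setminus K$, and therefore each point of $\partial U$ is isolated in $X$. (Equivalently, one could observe that $\partial U$ is a compact subset of the discrete set $D$, hence finite, but this stronger fact is not needed.)

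To conclude, write $\overline{U}=U\cup\partial U$. Here $U$ is open and each point of $\partial U$ is an open singleton, so $\overline{U}$ is a union of open sets, hence open. Thus $K':=\overline{U}$ is simultaneously compact and open with $K\subseteq K'$, which proves the first assertion. For the ``in particular'' part, $K'$ is clopen, so $X=K'\sqcup(X\setminus K')$ is a topological sum; here $K'$ is compact, while $X\setminus K'\subseteq X\setminus K\subseteq D$ is discrete.

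I expect the only genuine subtlety to be the passage from ``$D$ is a discrete subspace'' to ``the points of $X\setminus K$ are isolated in $X$''. This is exactly where the openness of $X\setminus K$, coming from compactness of $K$ together with the Hausdorff hypothesis, is essential: discreteness of a subset by itself does not make its points isolated in $X$, but discreteness combined with openness does. Everything else is routine bookkeeping with closures and boundaries.
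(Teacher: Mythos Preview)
Your proof is correct and follows essentially the same route as the paper. Both arguments invoke Lemma~\ref{lc} to obtain $U$ with $K\subseteq U$ and $\overline{U}$ compact, and both hinge on the observation that every point of $X\setminus K$ is isolated in $X$ (using that $X\setminus K$ is open and contained in the discrete set $D$). The only cosmetic difference is that the paper shows directly that each $x\in D\setminus U$ has $\{x\}$ open and hence $x\notin\overline{U}$, concluding $U=\overline{U}$, whereas you phrase it as ``$\overline{U}=U\cup\partial U$ is a union of open sets''; in fact your argument already forces $\partial U=\emptyset$, since an isolated point outside $U$ cannot lie in $\overline{U}$, so the two formulations yield the same clopen set $K'=U=\overline{U}$.
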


\begin{proof}
Since $X$ is locally compact, by Lemma~\ref{lc}, there exists an open set $U\supseteq K$ such that $\cl{U}$ is compact. Fix any $x\in D\setminus U$. Since $D$ is discrete and $K$ is closed, there exists an open neighborhood $V$ of $x$ such that $V\cap K=\emptyset$ and $V\cap D=\{x\}$. Thus the set $V\cap (X\setminus K)=\{x\}$ is open in $X$, witnessing that $x\notin \cl{U}$. Since the point $x$ was chosen arbitrarily, we get that $U=\cl{U}$. Hence $X$ is a topological sum of a compact open set $U$ and a discrete set $X\setminus U$, as required.
\end{proof}

\begin{lemma}\label{lem:densely}
A family $\F\subseteq K(Y)^X$ is densely equicontinuous at $x\in X$, if and only if for every $\varepsilon>0$ and every open neighborhood $O$ of $x$ there is a finite family $\U$ of open subsets of $X$ such that 
\begin{enumerate}[{\rm(i)}]
    \item $x\in\intr\cl{\bigcup \U}$;
    \item $x\in\cl U$ and $U\subseteq O$ for every $U\in\U$;
    \item $\diam F(U) <\varepsilon$ for every $F\in\F$ and $U\in\U$.
\end{enumerate}

\end{lemma}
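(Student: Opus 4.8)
The plan is to prove the two implications separately, with essentially all of the work in the forward direction, since the lemma really asserts that the two extra demands in (ii) can be arranged for free once (i) and (iii) are available. The backward implication I would dispatch immediately: given $\varepsilon>0$, apply the stated condition with $O=X$ (an open neighbourhood of $x$) to obtain a finite family $\U$ satisfying (i)--(iii); forgetting (ii) leaves precisely $x\in\intr\cl{\bigcup\U}$ together with $\diam F(U)<\varepsilon$ for all $F\in\F$ and $U\in\U$, which is dense equicontinuity of $\F$ at $x$.

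For the forward implication I would fix $\varepsilon>0$ and an open neighbourhood $O$ of $x$, and start from a finite family $\U_0$ witnessing dense equicontinuity at $x$, so $x\in\intr\cl{\bigcup\U_0}$ and $\diam F(U)<\varepsilon$ for all $F\in\F$, $U\in\U_0$. Then I would modify $\U_0$ in two steps. First, replace each $U\in\U_0$ by $U\cap O$ and discard empty members, obtaining $\U_1$; since $F(U\cap O)\subseteq F(U)$ the diameter bound (iii) is preserved, and trivially every member is contained in $O$. Second, from $\U_1$ discard every member whose closure omits $x$, so that the surviving family $\U$ satisfies $x\in\cl U$ for each member; conditions (iii) and $U\subseteq O$ are inherited by any subfamily.

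The only real content is verifying that neither step destroys (i), and I expect these two interior--closure computations to be the main, though elementary, obstacle. For the first step I would prove the point-set fact that if $x\in O$ with $O$ open and $x\in\intr\cl W$ for $W=\bigcup\U_0$, then $x\in\intr\cl{O\cap W}$ (noting $\bigcup\U_1=O\cap W$). This holds because $O\cap\intr\cl W$ is an open neighbourhood of $x$, and every point $y$ of it lies in $\cl{O\cap W}$: using that $O$ is open, any neighbourhood of $y$ may be shrunk inside $O$, where it must meet $W$ and hence meets $O\cap W$. For the second step, writing $\U'$ for the discarded members and exploiting finiteness, I would intersect the finitely many open neighbourhoods of $x$ witnessing $x\notin\cl U$ ($U\in\U'$) to produce a single open $N\ni x$ with $N\cap\bigcup\U'=\emptyset$; then $N\cap\intr\cl{\bigcup\U_1}$ is an open neighbourhood of $x$, and the same shrink-the-neighbourhood argument (now shrinking inside $N$, which avoids $\bigcup\U'$) shows it is contained in $\cl{\bigcup\U}$, yielding $x\in\intr\cl{\bigcup\U}$. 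Once these two facts are in hand, the family $\U$ satisfies (i)--(iii), completing the proof; the remainder is bookkeeping.
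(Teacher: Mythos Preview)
Your proposal is correct and follows essentially the same approach as the paper: form the family $\{O\cap U:U\in\U_0,\ x\in\cl U\}$ and verify (i) using that $O\cap\cl U\subseteq\cl{O\cap U}$ for open $O$ together with the observation that the finitely many discarded members can be avoided by an open neighbourhood of $x$. The paper carries out the discard-then-intersect operations in the opposite order and packages the verification of (i) into a single chain of inclusions, whereas you treat the two modifications separately, but the content is the same.
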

\begin{proof}
 The implication $(\Leftarrow)$ is trivial. 
 
 $(\Rightarrow)$: Fix an arbitrary $\varepsilon >0$. There is a finite family $\U=\{U_1,\dots,U_n\}$ of open subsets of $X$ such that $x\in\intr\cl{\bigcup \U}$ and for every $F\in\F$ and $U\in\U$, $\diam F(U) <\varepsilon$. Thus, there is an open neighborhood $V$ of $x$ such that $V\subseteq \cl{\bigcup \U}$, and also there is $U\in\U$ such that $x\in\cl U$. Let $A=\{i\leq n: x\notin\cl{U_i}\}$ and $B=\{1,\dots,n\}\setminus A$. Then for every open neighborhood $O$ of $x$ we have
    \[x\in O\cap V\cap \bigcap_{i\in A}(X\setminus \cl{U_{i}})\subseteq \bigcup_{i\in B}O\cap\cl{U_i}\subseteq \bigcup_{i\in B}\cl{O\cap U_i}\]
    and thus the family $\U'=\{O\cap U_i: i\in B\}$ satisfies the required properties.
\end{proof}

We shall deal with the following special subset of $MU(X,Y)$.

\begin{definition}
Let
  \[D_2(X,Y)=\{F\in MU(X,Y): |F(X)|\le 2 \}.\]  
\end{definition}

\begin{lemma}\label{closed}
    The set $D_2(X,Y)$ is closed in $MU(X,Y)$.
\end{lemma}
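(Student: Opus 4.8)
The plan is to prove the equivalent statement that the complement $MU(X,Y)\setminus D_2(X,Y)$ is open, which in the uniform setting of Definition~\ref{def:tauuc} amounts to producing, around each $F$ with $|F(X)|\ge 3$, a basic neighborhood $W(F,K,\varepsilon)$ that misses $D_2(X,Y)$ entirely. So I would fix $F\in MU(X,Y)$ with $|F(X)|\ge 3$ and choose three pairwise distinct points $y_1,y_2,y_3\in F(X)$, together with points $x_1,x_2,x_3\in X$ (possibly with repetitions) such that $y_k\in F(x_k)$ for $k=1,2,3$. The compact set witnessing the neighborhood will simply be the finite set $K=\{x_1,x_2,x_3\}$, and the radius will be
\[
\varepsilon=\tfrac13\min\{d(y_1,y_2),\,d(y_1,y_3),\,d(y_2,y_3)\}>0,
\]
the positivity coming from the distinctness of the $y_k$.

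The core is then a pigeonhole argument combined with the triangle inequality. Suppose toward a contradiction that some $G\in W(F,K,\varepsilon)\cap D_2(X,Y)$ exists. For each $k$ we have $H_d(F(x_k),G(x_k))<\varepsilon$, and since $G(x_k)$ is compact and $y_k\in F(x_k)$, I can pick $z_k\in G(x_k)\subseteq G(X)$ with $d(y_k,z_k)<\varepsilon$. Because $G\in D_2(X,Y)$, the set $G(X)$ has at most two elements, so among $z_1,z_2,z_3$ two must coincide, say $z_k=z_l$ with $k\ne l$. But then
\[
d(y_k,y_l)\le d(y_k,z_k)+d(z_l,y_l)<2\varepsilon=\tfrac23\min\{d(y_1,y_2),d(y_1,y_3),d(y_2,y_3)\}\le\tfrac23 d(y_k,y_l),
\]
which is impossible since $d(y_k,y_l)>0$. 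Hence $W(F,K,\varepsilon)\cap D_2(X,Y)=\emptyset$, so $F$ is an interior point of the complement, and $D_2(X,Y)$ is closed.

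I do not expect a serious obstacle here: the only points that need care are that finite subsets of $X$ are compact (so $W(F,K,\varepsilon)$ is genuinely a $\tau_{uc}$-basic neighborhood), and that closeness in the Hausdorff metric $H_d$ lets one pull each prescribed value $y_k\in F(x_k)$ back to a nearby value $z_k\in G(x_k)$. The combinatorial heart, forcing two of three chosen values to collapse under the cardinality bound $|G(X)|\le 2$, is what makes the neighborhood work, and it is exactly this collapse that contradicts the chosen separation scale $\varepsilon$. One could alternatively phrase the whole argument with a net $(G_i)\subseteq D_2(X,Y)$ converging to $F$, using that $\tau_{uc}$-convergence implies pointwise $H_d$-convergence and passing to a cofinal subnet on which the same pair among the three pulled-back values coincides; but the direct open-complement version above avoids subnets and is cleaner.
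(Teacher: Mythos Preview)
Your proof is correct and follows essentially the same approach as the paper: pick three distinct values of $F$, use the finite set of preimages as the compact $K$, and choose $\varepsilon$ a small fraction of the minimal pairwise distance. The only differences are cosmetic---the paper takes $\varepsilon<\tfrac12\min\{\cdots\}$ and simply asserts that $W(F,K,\varepsilon)\cap D_2(X,Y)=\emptyset$ is ``easy to see,'' whereas you take $\varepsilon=\tfrac13\min\{\cdots\}$ and spell out the pigeonhole/triangle-inequality step explicitly.
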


\begin{proof}
Consider any $F\in MU(X,Y)$ such that $|F(X)|>2$. Fix distinct $a,b,c\in F(X)$. Then there exists a finite subset $K\subseteq X$ such that $\{a,b,c\}\subseteq F(K)$. Fix a positive real $\varepsilon$ such that $\varepsilon<\frac{1}{2}\min\{d(a,b), d(a,c), d(b,c)\}$. At this point, it is easy to see that $W(F,K,\varepsilon)\cap D_2(X,Y)=\emptyset$. Hence the set $D_2(X,Y)$ is closed. 
\end{proof}

We say that a topological space is {\em non-trivial} if it has at least two points. Observe that $MU(X,Y)$ is non-trivial if and only if $Y$ is non-trivial.
Therefore, in the rest of the paper, we assume that $Y$ is a non-trivial metric space.
The following two lemmas play a key role in this paper. 
 
\begin{lemma}\label{lem:local}
If the subspace $D_2(X,Y)$ of $MU(X,Y)$ is locally $\sigma$-compact, then $X$ is a topological sum of a compact space and a discrete space.
Moreover, if $Y$ is non-discrete, then $X$ is discrete.
\end{lemma}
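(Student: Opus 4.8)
The plan is to argue by contradiction through the Arzelà--Ascoli theorem (Theorem~\ref{thm:arzel}). First observe that the set $P$ of non-isolated points of $X$ is closed: any point in $\cl P$ fails to be isolated and hence lies in $P$. Applying Lemma~\ref{lem:useful} to $K=P$ and $D=X\setminus P$ (the latter is a discrete subspace, being the set of isolated points), one sees that $X$ is a topological sum of a compact space and a discrete space \emph{exactly} when $P$ is compact. Thus for the first assertion it suffices to show that if $P$ is non-compact then $D_2(X,Y)$ is not locally $\sigma$-compact, and for the moreover part (with $Y$ non-discrete) to show that $P\neq\emptyset$ already leads to a contradiction. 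Fix $a\in Y$ (chosen non-isolated in $Y$ in the moreover case), suppose $\ca$ has a $\sigma$-compact neighborhood $N=\bigcup_n C_n$ in $D_2(X,Y)$ with each $C_n$ compact, and note that $N$ contains a basic neighborhood $W(\ca,K_0,\varepsilon_0)\cap D_2(X,Y)$. By Theorem~\ref{thm:arzel}, every $C_n$ is densely equicontinuous.

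The idea is to manufacture a single two-valued map $F^{a,b}_{U^*}$ that lies in $W(\ca,K_0,\varepsilon_0)\cap D_2(X,Y)$ yet defeats dense equicontinuity of \emph{every} $C_n$ at a chosen non-isolated point $x^*$, contradicting $F^{a,b}_{U^*}\in N$. Fix $b\neq a$ and set $\delta=d(a,b)$. The guiding observation is that if an open set $U'$ meets both $U^*$ and $X\setminus\cl{U^*}$ (that is, $U'$ \emph{straddles} $\partial U^*$), then $\{a,b\}\subseteq F^{a,b}_{U^*}(U')$ and hence $\diam F^{a,b}_{U^*}(U')\geq\delta$. Consequently, if for each $n$ some member of a finite family witnessing dense equicontinuity of $C_n$ at $x^*$ at scale $\delta/2$ straddles $\partial U^*$, then $F^{a,b}_{U^*}$ cannot belong to any $C_n$. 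To keep $F^{a,b}_{U^*}$ inside the neighborhood I use two devices. In the general case I force $K_0\subseteq U^*$, so that $F^{a,b}_{U^*}\equiv a$ on $K_0$; this requires the straddling to occur away from $K_0$, which is precisely where non-compactness of $P$ enters: by Lemma~\ref{lc} there is a relatively compact open $V\supseteq K_0$, and since $P$ is closed and non-compact we may pick a non-isolated $x^*\in P\setminus\cl V$. In the moreover case I instead choose $b$ with $0<d(a,b)<\varepsilon_0$ (possible as $a$ is non-isolated in $Y$), so that the values of $F^{a,b}_{U^*}$ on $K_0$ automatically lie within $\varepsilon_0$ of $a$ for \emph{every} $U^*$, leaving $x^*$ an arbitrary non-isolated point.

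The technical core is the recursive construction of $U^*$. For each $n$, invoke dense equicontinuity of $C_n$ at $x^*$ via Lemma~\ref{lem:densely} at scale $\delta/2$ with a neighborhood $O_n$ of $x^*$ chosen along the way, obtaining a finite family $\U_n$ of nonempty open sets with $\bigcup\U_n\subseteq O_n$, with $x^*\in\cl{U'}$ and $\diam F(U')<\delta/2$ for every $F\in C_n$ and $U'\in\U_n$. Since $x^*$ is non-isolated, each such $U'$ is infinite, so I may select distinct $p_n,q_n\in U'\setminus\{x^*\}$ and, using local compactness and regularity of $X$, an open bite $B_n\ni p_n$ with $\cl{B_n}$ compact and $\cl{B_n}\subseteq U'\setminus\{q_n,x^*\}$. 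I then take $O_{n+1}\ni x^*$ open with $\cl{O_{n+1}}$ compact and disjoint from $\cl{B_1}\cup\dots\cup\cl{B_n}$ and from $q_1,\dots,q_n$. Setting $W^*=\bigcup_n B_n$ and $U^*=W^*$ (moreover case) or $U^*=V\cup W^*$ with $\cl V\cap\cl{W^*}=\emptyset$ (general case, arranging $O_1\subseteq X\setminus\cl V$), a splitting of $W^*$ into the finitely many bites up to index $m$ and the tail $\bigcup_{n>m}B_n\subseteq O_{m+1}$ yields $q_m\notin\cl{W^*}$, while $B_m\subseteq U'\cap U^*$ shows the chosen member of $\U_m$ meets $U^*$. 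Hence that member straddles $\partial U^*$ for every $m$, so $F^{a,b}_{U^*}\in W(\ca,K_0,\varepsilon_0)\cap D_2(X,Y)$ belongs to no $C_n$, the desired contradiction. The first assertion then follows from Lemma~\ref{lem:useful}, and the moreover part gives $P=\emptyset$, i.e.\ $X$ is discrete.

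The step I expect to absorb most of the work, and the main obstacle, is this diagonalization. Because the families $\U_n$ and therefore all the bites $B_n$ and witnesses $q_n$ necessarily accumulate only at $x^*$, one must interleave the shrinking neighborhoods $O_n$ so that each outside-witness $q_m$ \emph{survives}, being excluded from $\cl{W^*}$ despite infinitely many later bites crowding toward $x^*$. This is exactly what the choice of $O_{n+1}$ (avoiding the finitely many earlier compact bites and witnesses) together with the containment $\cl{\bigcup_{n>m}B_n}\subseteq\cl{O_{m+1}}$ secures; verifying these separations is where local compactness and regularity are used in full, and where care is genuinely needed since $X$ need not be first countable and no convergent sequences are available.
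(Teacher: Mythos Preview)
Your proof is correct and follows essentially the same route as the paper: fix a $\sigma$-compact neighborhood of $\ca$, use Arzel\`a--Ascoli to obtain dense equicontinuity of each compact piece $C_n$, and then diagonally construct a two-valued map $F^{a,b}_{U^*}$ lying in that neighborhood yet violating the $\delta/2$-witness of every $C_n$ at a chosen non-isolated point. Your framing via the closed set $P$ of non-isolated points (reducing the first assertion to compactness of $P$) and your explicit verification that each $q_m\notin\cl{W^*}$ are minor stylistic variations on the paper's argument, which instead shows directly that $X\setminus K$ is discrete for the specific $K$ coming from the neighborhood.
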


\begin{proof}
    Let us first show that $X$ is a topological sum of a compact space and a discrete space.
    Fix any $a\in Y$ and note that $\ca\in D_2(X,Y)$. Then there is a compact $K\subseteq X$ and $\eta>0$ such that 
    \begin{equation}\label{eq}
    \cl{W(\ca ,K,\eta)}\cap D_2(X,Y)=\bigcup_{n\in\omega}\F_n,
    \end{equation}
    where for each $n\in \w$ the family $\F_n$ is compact.

    By Lemma~\ref{lem:useful}, it is enough to prove that $X\setminus K$ is discrete.
    Let $a,b\in Y$ be two distinct points. Fix a positive $\varepsilon < d(a,b)$ and an arbitrary $x\in X\setminus K$. By Theorem \ref{thm:arzel}, we have that the families $\F_n$, $n\in\w$ are densely equicontinuous at $x$.
    By Lemma~\ref{lem:densely}, for every $n\in\omega$ and open neighborhood $O$ of $x$ there is a finite family $\U_{n,O}$ of open subsets of $X$ such that $x\in\intr\cl{\bigcup \U_{n,O}}$; for every $U\in\U_n$, $U\subseteq O$; and for every $F\in\F_n$ and $U\in\U_{n,O}$, $\diam F(U) <\varepsilon$.
    \begin{claim}
    There is $n\in \omega$ and an open neighborhood $O$ of $x$ such that the family $\U_{n,O}$ consists of finite open sets.
    \end{claim}
    \begin{proof}[Proof of the claim:]
    Seeking a contradiction, assume that for each $n\in \omega$ and open neighborhood $O$ of $x$ the family $\U_{n,O}$ contains an infinite element. We are going to construct a sequence of points and open sets by induction.
For $n=0$ let $O_0$ be an arbitrary open neighborhood of $x$ such that $\overline{O_0}\cap X\setminus K=\emptyset$. By the assumption, there exists an infinite $U_0\in\U_{0,O_0}$. There are $y_0,z_0\in U_0\subseteq O_0$ such that $x,y_0,z_0$ are all distinct. Choose disjoint open sets $V_0,W_0\subseteq U_0$ such that $y_0\in V_0$, $z_0\in W_0$ and $x\not\in \cl{V_0}\cup \cl{W_0}$.

    Suppose that for all $j<n$ we constructed open sets $O_j,U_j,V_j,W_j$ and points $y_j\in V_j$, $z_j\in W_j$ such that $x\in O_j$ and $x\not\in \cl{V_j}\cup \cl{W_j}$. Define an open neighborhood $O_n$ of $x$ by
    \[O_n=O_{n-1}\cap (X\setminus \cl{V_{n-1}}) \cap (X\setminus \cl{W_{n-1}}).\]
     Choose an infinite set $U_n\in\U_{n,O_n}$. There are $y_n,z_n\in U_n\subseteq O_n$ such that $x,y_n,z_n$ are all distinct. Fix disjoint open sets $V_n,W_n\subseteq U_n$ such that $y_n\in V_n$, $z_n\in W_n$ and $x\not\in \cl{V_n}\cup \cl{W_n}$.

    By the construction, all of the sets in $\{V_n,W_n:n\in\omega\}$ are pairwise disjoint. Put $V=(X\setminus \overline{O_0})\cup \bigcup_{n\in\omega} V_n$. Since $K\subseteq X\setminus \overline{O_0}$, we get that $F^{a,b}_{V}\in W(c_a,K,\eta)\cap D_2(X,Y)$. Observe that for every $n\in\w$, $F^{a,b}_{V}(U_n)=\{a,b\}$ and $d(a,b)>\varepsilon$. So $F^{a,b}_{V}\notin \F_n$ for each $n\in\w$, which contradicts Equation~\ref{eq}.
    \end{proof}

  %  The claim above implies that there exists a finite open subset $U\in \mathcal U_{n,O}$ such that $x\in \overline{U}$. 
  The claim above implies that the set $\bigcup \U_{n,O}=\intr(\overline{\bigcup\U_{n,O}})$ is a finite open neighborhood of $x$, witnessing that $x$ is an isolated point. Since $x$ was chosen arbitrarily, we obtain that $X\setminus K$ is discrete.

    \medskip
    Now, assume that $Y$ is non-discrete. Let us show that $X$ is discrete.
    There is $a\in Y$ such that for every $\eta>0$ there exists $b\in Y$ with $d(a,b)<\eta$. Then for every compact $K\subseteq X$ and open $V\subseteq X$ we have $F^{a,b}_V\in W(c_a,K,\eta)\cap D_2(X,Y)$. We repeat the previous arguments with the following changes: now $x$ is an arbitrary point of $X$ (it can belong to $K$) and $O_0=X$. This way, we show that an arbitrary point of $X$ possesses a finite open neighborhood, implying that $X$ is discrete.
\end{proof}

The proof of the following lemma is similar to the one of Lemma~\ref{lem:local}.

\begin{lemma}\label{lem:global} 
  If $D_2(X,Y)$ is $\sigma$-compact, then $X$ is discrete.
\end{lemma}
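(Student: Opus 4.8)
The plan is to follow the template of Lemma~\ref{lem:local}, but to exploit that global $\sigma$-compactness removes the need to localize around a constant map and, consequently, the need to assume $Y$ non-discrete. First I would use the hypothesis to write $D_2(X,Y)=\bigcup_{n\in\w}\F_n$ where each $\F_n$ is compact. By Theorem~\ref{thm:arzel}, every $\F_n$ is densely equicontinuous, in particular at each point $x\in X$. Since $Y$ is non-trivial, fix two distinct points $a,b\in Y$ and a real $\varepsilon$ with $0<\varepsilon<d(a,b)$. The goal reduces to showing that an arbitrary $x\in X$ is isolated.

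Fixing such an $x$, Lemma~\ref{lem:densely} supplies, for every $n\in\w$ and every open neighborhood $O$ of $x$, a finite family $\U_{n,O}$ of open subsets of $X$ with $x\in\intr\cl{\bigcup\U_{n,O}}$, with $U\subseteq O$ for each $U\in\U_{n,O}$, and with $\diam F(U)<\varepsilon$ for every $F\in\F_n$ and $U\in\U_{n,O}$. The crux is the same claim as before: for some $n$ and some $O$, every member of $\U_{n,O}$ is a finite set. Granting the claim, $\bigcup\U_{n,O}$ is a finite open set, being a finite union of finite open sets, and since finite subsets of a Hausdorff space are closed, $x\in\intr\cl{\bigcup\U_{n,O}}=\bigcup\U_{n,O}$; thus $x$ has a finite open neighborhood and is therefore isolated. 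As $x$ was arbitrary, $X$ is discrete.

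To prove the claim I would argue by contradiction exactly as in Lemma~\ref{lem:local}, assuming each $\U_{n,O}$ contains an infinite member and building the sequences inductively, but now starting from $O_0=X$ since there is no compact set to avoid. At stage $n$ I pick an infinite $U_n\in\U_{n,O_n}$, choose $y_n,z_n\in U_n$ distinct from $x$, and select disjoint open sets $V_n,W_n\subseteq U_n$ with $y_n\in V_n$, $z_n\in W_n$ and $x\notin\cl{V_n}\cup\cl{W_n}$ (possible as $X$ is Hausdorff), setting $O_{n+1}=O_n\cap(X\setminus\cl{V_n})\cap(X\setminus\cl{W_n})$. This keeps $\{V_n,W_n:n\in\w\}$ pairwise disjoint; put $V=\bigcup_{n\in\w}V_n$, with no extra piece needed. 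Then $F^{a,b}_V\in D_2(X,Y)$, and for each $n$ one checks $F^{a,b}_V(U_n)=\{a,b\}$: the point $y_n\in V_n\subseteq V$ contributes the value $a$, while $z_n\in W_n$ has the open neighborhood $W_n$ disjoint from $V$, whence $z_n\notin\cl V$ and its value is $b$. Thus $\diam F^{a,b}_V(U_n)=d(a,b)>\varepsilon$, so $F^{a,b}_V\notin\F_n$ for every $n$, contradicting $F^{a,b}_V\in D_2(X,Y)=\bigcup_{n\in\w}\F_n$.

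The routine verification is that $z_n\notin\cl V$, which reduces to $W_n\cap V=\emptyset$, since an open set disjoint from $V$ misses $\cl V$ as well: for $j>n$ we have $V_j\subseteq O_{n+1}\subseteq X\setminus\cl{W_n}$, for $j<n$ we have $W_n\subseteq O_n\subseteq X\setminus\cl{V_j}$, and for $j=n$ we use the disjointness of $V_n$ and $W_n$. The one conceptual point, and the only genuine difference from Lemma~\ref{lem:local}, is that global $\sigma$-compactness lets the contradiction run against the whole decomposition $\bigcup_{n\in\w}\F_n$ rather than against a neighborhood $\cl{W(\ca,K,\eta)}$ of a constant map; this is exactly what removes both the residual compact piece and the hypothesis that $Y$ be non-discrete.
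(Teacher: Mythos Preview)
Your proof is correct and follows exactly the approach of the paper, which simply writes $D_2(X,Y)=\bigcup_{n\in\w}\F_n$ with $\F_n$ compact, invokes Theorem~\ref{thm:arzel} and Lemma~\ref{lem:densely}, and then refers back to the inductive construction in the proof of Lemma~\ref{lem:local} with the modification $O_0=X$. You have merely spelled out that referenced argument in full (including the verification that $z_n\notin\cl V$), so there is no substantive difference.
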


\begin{proof}
    Since the space $D_2(X,Y)$ is $\sigma$-compact, there exist compact subsets $\F_n$, $n\in\w$ of $D_2(X,Y)$ such that
$D_2(X,Y)=\bigcup_{n\in\omega}\F_n$.
    
    Fix an arbitrary $x\in X$, distinct $a,b\in Y$ and a positive $\varepsilon <d(a,b)$. By Theorem~\ref{thm:arzel}, we have that the families $\F_n$, $n\in\w$ are densely equicontinuous at $x$.
    By Lemma~\ref{lem:densely}, for every $n\in\omega$ and open neighborhood $O$ of $x$ there is a finite family $\U_{n,O}$ of open subsets of $X$ such that $x\in\intr\cl{\bigcup \U_{n,O}}$; for every $U\in\U_n$, $U\subseteq O$; and for every $F\in\F_n$ and $U\in\U_{n,O}$, $\diam F(U) <\varepsilon$. 
Similarly, as in the proof of Lemma~\ref{lem:local}, it can be shown that there exists $n\in\w$ and an open neighborhood $O$ of $x$ such that $\U_{n,O}$ consists of finite open sets. It follows that the point $x$ is isolated. Hence the space $X$ is discrete.   
\end{proof}

The following theorem characterizes compactness of a space $MU(X,Y)$.

\begin{theorem}\label{comp}
    The following  assertions are equivalent:
    \begin{enumerate}[\rm (1)]
        \item $MU(X,Y)$ is compact;
        \item $D_2(X,Y)$ is compact;   
        \item $Y$ is compact and $X$ is discrete.
    \end{enumerate}
\end{theorem}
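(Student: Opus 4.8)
The plan is to prove the cyclic chain of implications $(1)\Rightarrow(2)\Rightarrow(3)\Rightarrow(1)$, assembling the lemmas already established.

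The implication $(1)\Rightarrow(2)$ is immediate: by Lemma~\ref{closed} the set $D_2(X,Y)$ is closed in $MU(X,Y)$, and a closed subset of a compact space is compact.

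For $(2)\Rightarrow(3)$ I would extract the two conclusions separately. Since a compact space is in particular $\sigma$-compact, Lemma~\ref{lem:global} applied to $D_2(X,Y)$ yields at once that $X$ is discrete. To obtain compactness of $Y$, I would invoke the isometry $\phi\colon Y\to MU(X,Y)$, $\phi(a)=\ca$, from Lemma~\ref{trivial1}. Each constant map satisfies $|\ca(X)|=1$, so $\phi(Y)\subseteq D_2(X,Y)$; since $\phi(Y)$ is closed in $MU(X,Y)$, it is a closed subset of the compact space $D_2(X,Y)$ and hence compact. As $\phi$ is an isometry onto its image, $Y$ is homeomorphic to the compact set $\phi(Y)$, so $Y$ is compact.

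For $(3)\Rightarrow(1)$, assume $Y$ is compact and $X$ is discrete. By Lemma~\ref{trivial}, $MU(X,Y)=Y^X$. The point requiring care is the identification of the topology: in a discrete space the compact subsets are precisely the finite ones, so the base $\{W[K,\varepsilon]:K\text{ compact},\ \varepsilon>0\}$ of $\U_{uc}$ reduces to uniform convergence on finite sets. Since $H_d(\{a\},\{b\})=d(a,b)$ for singletons, this uniformity coincides with that of pointwise convergence, and hence $\tau_{uc}$ is exactly the product topology on $Y^X$. The Tychonoff theorem then gives compactness of $Y^X$ from compactness of $Y$, which is $(1)$.

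I expect the only genuinely non-routine step to be the verification in $(3)\Rightarrow(1)$ that $\tau_{uc}$ collapses to the product topology on $Y^X$ when $X$ is discrete; the remaining implications are direct consequences of Lemmas~\ref{closed},~\ref{lem:global},~\ref{trivial1}, and~\ref{trivial}.
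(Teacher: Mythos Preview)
Your proof is correct and follows essentially the same route as the paper: the paper proves the same cycle $(1)\Rightarrow(2)\Rightarrow(3)\Rightarrow(1)$ using Lemmas~\ref{closed}, \ref{trivial1}, \ref{lem:global}, and~\ref{trivial} in the identical places. The only difference is that the paper's $(3)\Rightarrow(1)$ simply cites Lemma~\ref{trivial} without spelling out that $\tau_{uc}$ coincides with the product topology on $Y^X$ when $X$ is discrete, whereas you make this explicit; your extra care there is justified but not a different approach.
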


\begin{proof}
Implication (1)$\Rightarrow$ (2) follows from Lemma~\ref{closed}. 

(2) $\Rightarrow$ (3): Note that $Y$, being homeomorphic to the closed subspace of $D_2(X,Y)$ consisting of all constant maps, is compact. Discreteness of $X$ follows from Lemma~\ref{lem:global}.

The implication (3) $\Rightarrow$ (1) follows from Lemma~\ref{trivial}. 
\end{proof}

The following two technical lemmas will be useful.
\begin{lemma} \label{lem:homeomorphism}
	Let $X$ be a topological sum of $X_1$ and $X_2$, then the space $MU(X,Y)$ is homeomorphic to $MU(X_1,Y)\times MU(X_2,Y)$.
\end{lemma}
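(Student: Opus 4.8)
The plan is to construct an explicit homeomorphism by exploiting the fact that a minimal usco map on a topological sum is completely determined by its restrictions to the summands. First I would define the candidate map $\Phi\colon MU(X,Y)\to MU(X_1,Y)\times MU(X_2,Y)$ by $\Phi(F)=(F\restriction X_1, F\restriction X_2)$, where $F\restriction X_i$ denotes the map whose graph is $F\cap(X_i\times Y)$. Before anything else I must check that this is well-defined, i.e. that if $F$ is minimal usco on $X=X_1\oplus X_2$ then each restriction $F\restriction X_i$ is minimal usco on $X_i$. This should follow cleanly from Theorem~\ref{thm:minimalusco}: if $f\colon A\to Y$ is a dense, subcontinuous, quasicontinuous selection of $F$ with $\cl f=F$, then since $X_1,X_2$ are clopen, $A\cap X_i$ is dense in $X_i$, the restriction $f\restriction(A\cap X_i)$ inherits subcontinuity and quasicontinuity (these are local properties, and convergent nets in $X_i$ stay eventually in the clopen set $X_i$), and its closure in $X_i\times Y$ is exactly $F\restriction X_i$. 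Conversely, given $(G_1,G_2)$ I define $\Psi(G_1,G_2)$ as the map $G$ with $G\restriction X_i=G_i$; the same clopen-summand argument shows $G\in MU(X,Y)$, so $\Psi$ is a two-sided inverse of $\Phi$.

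Next I would verify continuity of $\Phi$ and $\Psi$ with respect to $\tau_{uc}$. The key observation is that every compact subset $K\subseteq X$ splits as $K=(K\cap X_1)\sqcup(K\cap X_2)$, with each piece compact because $X_1,X_2$ are clopen. Hence the basic uniform entourage $W[K,\varepsilon]$ on $MU(X,Y)$ factors through the entourages $W[K\cap X_1,\varepsilon]$ and $W[K\cap X_2,\varepsilon]$ on the two factors: for $x\in K$ the condition $H_d(F(x),G(x))<\varepsilon$ is tested on $X_1$ or $X_2$ according to where $x$ lives, and the Hausdorff-metric values of $F$ and $F\restriction X_i$ agree on $X_i$. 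This makes the uniformity $\U_{uc}$ on $X$ correspond exactly to the product uniformity on $MU(X_1,Y)\times MU(X_2,Y)$ (with the product of the two $\U_{uc}$ uniformities), so $\Phi$ is in fact a uniform isomorphism and in particular a homeomorphism.

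I anticipate that the main obstacle is the well-definedness step rather than the topological matching: one must argue carefully that minimality is preserved under restriction and re-gluing. The subtle point is that minimality is a global condition (no usco map is properly contained), so a priori restricting could destroy it or gluing two minimal pieces could fail to be minimal. The clopen structure of the summands is precisely what rescues this — because $X_i$ is open, upper semicontinuity and the selection-based characterization localize to $X_i$, and because $X_i$ is closed, nets and closures do not leak across the decomposition. I would phrase the argument entirely through Theorem~\ref{thm:minimalusco}, transferring a dense subcontinuous quasicontinuous selection back and forth, since that characterization converts the awkward minimality condition into pointwise/local properties of an ordinary function that behave well under restriction to a clopen set. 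Once well-definedness is secured, the bijectivity and bicontinuity are essentially bookkeeping with the decomposition $K=(K\cap X_1)\sqcup(K\cap X_2)$.
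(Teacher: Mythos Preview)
Your proposal is correct and follows essentially the same approach as the paper: define $\Phi(F)=(F\restriction X_1,F\restriction X_2)$ with inverse given by gluing, and verify bicontinuity via the decomposition $K=(K\cap X_1)\sqcup(K\cap X_2)$ of compact sets, which makes basic $\tau_{uc}$-neighborhoods factor as products. You are in fact more careful than the paper, which simply asserts bijectivity without justifying that restriction and gluing preserve minimality; your plan to handle this via Theorem~\ref{thm:minimalusco} is appropriate and fills a gap the paper leaves implicit.
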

\begin{proof}
For $i\in\{1,2\}$, define $\varphi_i\colon MU(X,Y)\to MU(X_i,Y)$ by $\varphi_i(F)=F{\restriction}X_i$. 
%It is correctly defined since $X_i$ is open.
Let us define the map \[\varphi\colon MU(X,Y)\to MU(X_1,Y)\times MU(X_2,Y)\]
by 
$$\varphi(F)=(\varphi_1(F), \varphi_2(F)), \hbox{ for every }F\in MU(X,Y).$$
One can easily verify that $\varphi$ is bijective with $\varphi^{-1}((F_1,F_2))=F_1\cup F_2$.

Note that for every compact $K\subseteq X$ we have that $K=K_1\cup K_2$, where $K_i=K\cap X_i\in K(X_i)$.
We claim that for any $F\in MU(X,Y)$, the following holds:
\[\varphi_1(W(F,K,\varepsilon))=W(F{\restriction} X_1,K_1,\varepsilon).\]
The inclusion $\varphi_1(W(F,K,\varepsilon))\subseteq W(F{\restriction} X_1,K_1,\varepsilon)$ is straightforward. For the reverse inclusion, observe that any $F_1\in W(F{\restriction} X_1,K_1,\varepsilon)$ can be extended to $G\in W(F,K,\varepsilon)$, where $G=F_1\cup F{\restriction} X_2$.
Dually, one can check that
\[\varphi_2(W(F,K,\varepsilon))=W(F{\restriction} X_2,K_2,\varepsilon),\]
and thus
\[\varphi(W(F,K,\varepsilon))=W(F{\restriction} X_1,K_1,\varepsilon)\times W(F{\restriction} X_2,K_2,\varepsilon).\]
It is easy to see that $\varphi$ is a homeomorphism.
\end{proof}

\begin{lemma}\label{lem:Xcompact}
Let $Y$ be non-compact. If $MU(X,Y)$ is locally $\sigma$-compact, then $X$ is compact.
\end{lemma}
\begin{proof}
 Seeking a contradiction, assume that $X$ is not compact. By lemmas~\ref{closed} and~\ref{lem:local}, we have that $X$ is a topological sum of a compact set $K$ and an infinite discrete set $D$. %Without loss of generality, we can assume that $K$ is nonempty.
 By Lemma~\ref{lem:homeomorphism}, we have that $MU(X,Y)$ is homeomorphic to $MU(K,Y)\times MU(D,Y)$. In particular, $MU(D,Y)$ can be embedded as a closed subset in $MU(X,Y)$. By Lemma~\ref{trivial}, $MU(D,Y)= Y^D$. Since $Y$ is a non-compact metric space, it has a countably infinite closed discrete subset $B$. Since $D$ is infinite, it contains a countable subset $C$. Thus, Baire space $\omega^\omega$ is homeomorphic to a closed subset $B^C$ of $Y^D$, i.e., it can be embedded as a closed subset of $MU(X,Y)$. Recall that $\omega^\omega$ is not locally $\sigma$-compact, a contradiction.
\end{proof}

The following theorem characterizes $\sigma$-compactness of a space $MU(X,Y)$.

\begin{theorem}\label{thm:sigma}
   A space $MU(X,Y)$ is $\sigma$-compact if and only if one of the following conditions holds: 
    \begin{enumerate}[\rm (i)]
        \item $Y$ is compact, $X$ is discrete;
        \item $Y$ is $\sigma$-compact, $X$ is finite.
    \end{enumerate}
    Moreover, in both cases we have that $MU(X,Y)=Y^X$, and in case (i) the space $MU(X,Y)$ is compact.
\end{theorem}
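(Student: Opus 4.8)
The plan is to prove both directions of the equivalence, and then verify the two "moreover" claims.

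For the direction establishing that $\sigma$-compactness of $MU(X,Y)$ forces (i) or (ii), I would first observe that since $Y$ embeds as a closed subspace of $MU(X,Y)$ via the isometry $\phi$ of Lemma~\ref{trivial1}, and a closed subspace of a $\sigma$-compact space is $\sigma$-compact, we immediately get that $Y$ is $\sigma$-compact. Next, since $D_2(X,Y)$ is closed in $MU(X,Y)$ by Lemma~\ref{closed}, it too is $\sigma$-compact, so Lemma~\ref{lem:global} yields that $X$ is discrete. Now I would split into cases according to whether $X$ is finite or infinite. If $X$ is finite, then it is compact, and combined with $Y$ being $\sigma$-compact we land in case (ii). If $X$ is infinite and discrete, I must show $Y$ is in fact compact, landing in case (i). For this, I would use Lemma~\ref{trivial} to identify $MU(X,Y)=Y^X$, so $Y^X$ is $\sigma$-compact with $X$ countably infinite; then arguing as in Lemma~\ref{lem:Xcompact}, if $Y$ were non-compact it would contain a countably infinite closed discrete set $B$, whence $B^X$ would embed $\omega^\omega$ as a closed subspace, contradicting that $\omega^\omega$ is not $\sigma$-compact.

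For the converse, I would handle each case directly. In case (i), $X$ discrete gives $MU(X,Y)=Y^X$ by Lemma~\ref{trivial}; since $Y$ is compact, $Y^X$ is compact by Tychonoff's theorem, hence $\sigma$-compact. In case (ii), $X$ finite and discrete again gives $MU(X,Y)=Y^X$ by Lemma~\ref{trivial}, which is a finite product of the $\sigma$-compact space $Y$, hence $\sigma$-compact since finite products of $\sigma$-compact spaces are $\sigma$-compact.

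The "moreover" claims then follow with little extra work. In both cases $X$ is discrete, so $MU(X,Y)=Y^X$ by Lemma~\ref{trivial}, giving the first additional assertion. In case (i) specifically, $Y^X$ is a product of compact spaces, hence compact by Tychonoff. The main obstacle I anticipate is the infinite-discrete subcase of the forward direction: one must rule out a non-compact $Y$ when $X$ is infinite, and the cleanest route is precisely the $\omega^\omega$-embedding argument already deployed in Lemma~\ref{lem:Xcompact}, exploiting that a $\sigma$-compact space cannot contain a closed copy of the non-$\sigma$-compact Baire space $\omega^\omega$. The only subtlety is that $\sigma$-compactness (rather than local $\sigma$-compactness as in Lemma~\ref{lem:Xcompact}) is being contradicted, but $\omega^\omega$ fails to be $\sigma$-compact as well, so the same contradiction applies.
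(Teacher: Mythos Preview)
Your argument is correct and follows essentially the same route as the paper: both obtain that $Y$ is $\sigma$-compact (via Lemma~\ref{trivial1}) and $X$ is discrete (via Lemmas~\ref{closed} and~\ref{lem:global}), and both finish through the $\omega^\omega$-embedding idea of Lemma~\ref{lem:Xcompact}. The only organizational difference is that the paper splits on whether $Y$ is compact (if not, Lemma~\ref{lem:Xcompact} applied verbatim gives $X$ compact, hence finite), whereas you split on whether $X$ is finite and then reprove the relevant part of Lemma~\ref{lem:Xcompact} inline; the paper's route is marginally cleaner since $\sigma$-compact trivially implies locally $\sigma$-compact, so Lemma~\ref{lem:Xcompact} can be quoted directly.

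One small slip: you write ``$Y^X$ is $\sigma$-compact with $X$ countably infinite,'' but nothing so far forces $X$ to be countable (e.g.\ $Y$ compact, $X$ uncountable discrete gives $Y^X$ compact). This is harmless, since the $\omega^\omega$-embedding only needs a countable subset $C\subseteq X$, exactly as in the proof of Lemma~\ref{lem:Xcompact}; just drop the word ``countably.''
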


\begin{proof}
($\Rightarrow$): Lemmas~\ref{trivial1} and~\ref{closed} imply that $Y$ and $D_2(X,Y)$ are $\sigma$-compact. By Lemma~\ref{lem:global} we have that $X$ is discrete. So $MU(X,Y)=Y^X$, by Lemma~\ref{trivial}.
If $Y$ is compact, then Theorem~\ref{comp} yields that $MU(X,Y)$ is compact, and condition~(i) holds.
If $Y$ is not compact, Lemma~\ref{lem:Xcompact} implies that $X$ is compact. Hence $X$ is finite and condition (ii) holds. %Note that by Theorem~\ref{comp}, the space $MU(X,Y)$ is non-compact.

($\Leftarrow$): Again, by Lemma~\ref{trivial} we have that $MU(X,Y)=Y^X$. It remains to recall that a product of compact spaces remains compact, and a finite product of $\sigma$-compact spaces remains $\sigma$-compact.
\end{proof}

The following proposition characterizes discreteness of a space $MU(X,Y)$.

\begin{proposition}\label{thdiscrete}
    The following assertions are equivalent:
    \begin{enumerate}[\rm (1)]
        \item $MU(X,Y)$ is discrete;
        \item $D_2(X,Y)$ is discrete;
        \item $Y$ is discrete, $X$ is compact.
    \end{enumerate}
\end{proposition}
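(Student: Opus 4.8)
The three conditions will be connected by proving $(1)\Rightarrow(2)\Rightarrow(3)\Rightarrow(1)$. The implication $(1)\Rightarrow(2)$ is immediate, since $D_2(X,Y)$ carries the subspace topology from $MU(X,Y)$ and every subspace of a discrete space is discrete. The remaining two implications carry the content.

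For $(2)\Rightarrow(3)$ the plan is to establish the two conclusions separately. That $Y$ is discrete follows at once from Lemma~\ref{trivial1}: the constant maps form a subspace $\phi(Y)$ of $D_2(X,Y)$ which is isometric, hence homeomorphic, to $Y$; as a subspace of a discrete space it is discrete, so $Y$ is discrete. To see that $X$ is compact, observe that a discrete space is locally compact, hence locally $\sigma$-compact, so Lemma~\ref{lem:local} applies and yields a decomposition $X=K\sqcup D$ into a topological sum of a compact set $K$ and a discrete set $D$. If $X$ were not compact then $D$ would be infinite, and being closed discrete it would meet every compact set in a finite set. Thus for an arbitrary compact $K'\subseteq X$ I can pick an isolated point $x_0\in D\setminus K'$ and, fixing $b\neq a$ (possible since $Y$ is non-trivial), form the map $F^{b,a}_{\{x_0\}}$, which is single-valued, equals $\{a\}$ off $x_0$ and $\{b\}$ at $x_0$, and hence lies in $D_2(X,Y)$. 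Since it agrees with $\ca$ on $K'\subseteq X\setminus\{x_0\}$, it belongs to $W(\ca,K',\varepsilon)$ for every $\varepsilon>0$ while differing from $\ca$. Therefore $\ca$ is not isolated in $D_2(X,Y)$, a contradiction, and so $X$ is compact.

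For $(3)\Rightarrow(1)$, fix $F\in MU(X,Y)$. The key observation is that the usco image of a compact set is compact, so $F(X)$ is a compact subset of the discrete space $Y$ and hence finite, say $F(X)=\{y_1,\dots,y_k\}$. Because each $y_i$ is isolated in $Y$ and there are only finitely many of them, the number $\varepsilon=\tfrac12\min_{i\le k}\operatorname{dist}(y_i,Y\setminus\{y_i\})$ is positive. A short computation with the Hausdorff metric then shows that for every $G\in MU(X,Y)$ and every $x\in X$ the inequality $H_d(F(x),G(x))<\varepsilon$ forces $G(x)=F(x)$. Taking $K=X$, which is legitimate since $X$ is compact, it follows that $W(F,X,\varepsilon)\cap MU(X,Y)=\{F\}$, so $F$ is isolated; as $F$ was arbitrary, $MU(X,Y)$ is discrete.

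The main obstacle lies in $(3)\Rightarrow(1)$: a discrete metric space need not be uniformly discrete, so one cannot isolate all maps simultaneously with a single $\varepsilon$. The resolution is to use compactness of $X$ twice, once to force each individual $F$ to have finite range, so that a map-dependent $\varepsilon$ is available, and once to use $X$ itself as the compact set defining a $\tau_{uc}$-neighborhood. In $(2)\Rightarrow(3)$ the only delicate point is that non-compactness of $X$ must first be converted, via Lemma~\ref{lem:local}, into the presence of isolated points lying outside every compact set, which is exactly what powers the perturbation $F^{b,a}_{\{x_0\}}$.
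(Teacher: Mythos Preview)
Your proof is correct, and the arguments for $(1)\Rightarrow(2)$ and $(3)\Rightarrow(1)$ match the paper's almost exactly (the paper takes $\varepsilon$ without the factor $\tfrac12$, but this is immaterial).

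The one genuine difference is in $(2)\Rightarrow(3)$, specifically in showing that $X$ is compact. You invoke Lemma~\ref{lem:local} (discreteness $\Rightarrow$ local $\sigma$-compactness $\Rightarrow$ $X$ decomposes as compact $\sqcup$ discrete) to manufacture isolated points outside any given compact set, then perturb $\ca$ at a single such isolated point. The paper instead argues directly from local compactness via Lemma~\ref{lc}: for any compact $K$ choose an open $U(K)\supseteq K$ with $\overline{U(K)}$ compact; since $X$ is not compact, $X\setminus\overline{U(K)}\neq\emptyset$, and the map $F^{a,b}_{U(K)}$ lies in $W(\ca,K,\varepsilon)\cap D_2(X,Y)\setminus\{\ca\}$. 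The paper's route is lighter---it uses only the elementary Lemma~\ref{lc} rather than the substantial Lemma~\ref{lem:local}---and does not require first producing isolated points. Your route is still valid and has the small conceptual advantage of exhibiting the perturbing map as a single-point modification of $\ca$, but it leans on heavier machinery than necessary.
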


\begin{proof}
The implication $(1)\Rightarrow (2)$ is obvious. 
    
    $(2)\Rightarrow (3)$: Since $Y$ is a  subspace of $D_2(X,Y)$, we have that $Y$ is discrete. To derive a contradiction, assume that $X$ is not compact. Bearing in mind that $X$ is locally compact, Lemma~\ref{lc} implies that for every compact subset $K$ of $X$ there exists an open set $U(K)$ such that  $K\subseteq U(K)$ and $\overline{U(K)}$ is compact.   Thus $X\setminus \cl{U(K)}\neq \emptyset$. Fix distinct $a,b\in Y$. For every $\varepsilon >0$ and compact subset $K\subseteq X$ we have that $F^{a,b}_{U(K)}\in W(\ca,K,\varepsilon)\cap D_2(X,Y)$. Hence $\ca$ is not an isolated point of $D_2(X,Y)$, which contradicts our assumption.

    $(3)\Rightarrow (1)$: Fix any $F\in MU(X,Y)$. Since $X$ is compact, so is $F(X)$ (see \cite[Theorem 6.2.11]{Beer1993}. Hence $F(X)$ is finite. It follows there exists $\varepsilon>0$ such that $d(x,y)\geq \varepsilon$ for each $x\in F(X)$ and $y\in Y\setminus\{x\}$. At this point, it is easy to see that $W(F,X,\varepsilon)=\{F\}$. Hence the space $MU(X,Y)$ is discrete.
\end{proof}

The following theorem characterizes local compactness of a space $MU(X,Y)$.

\begin{theorem}\label{locallycomp}
    A space $MU(X,Y)$ is locally compact if and only if one of the following conditions holds:
        \begin{enumerate}[\rm (i)]
            \item $Y$ is compact, $X$ is discrete;
            \item $Y$ is discrete, $X$ is compact;
            \item $Y$ is locally compact, $X$ is finite;
            \item $Y$ is finite, $X$ is a topological sum of a compact set and a discrete set.
        \end{enumerate} 
    Moreover, the space $MU(X,Y)$ is compact in case (i) and discrete in case (ii).
\end{theorem}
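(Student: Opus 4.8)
The plan is to prove both implications, leaning on the already-established characterizations of compactness (Theorem~\ref{comp}) and discreteness (Proposition~\ref{thdiscrete}), together with the structural lemmas~\ref{lem:local},~\ref{lem:Xcompact}, and~\ref{lem:homeomorphism}. The ``moreover'' clause is immediate: in case~(i) Theorem~\ref{comp} already gives compactness, and in case~(ii) Proposition~\ref{thdiscrete} gives discreteness; both of course imply local compactness, so the converse direction for~(i) and~(ii) comes for free.

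For the converse in the remaining cases I would argue as follows. In case~(iii), $X$ is finite, hence finite discrete, so $MU(X,Y)=Y^X$ by Lemma~\ref{trivial} is a finite product of copies of the locally compact space $Y$, and a finite product of locally compact spaces is locally compact. In case~(iv), write $X=K\sqcup D$ with $K$ compact and $D$ discrete and apply Lemma~\ref{lem:homeomorphism} to get $MU(X,Y)\cong MU(K,Y)\times MU(D,Y)$. Here $MU(K,Y)$ is discrete by Proposition~\ref{thdiscrete} (as $Y$ is finite, hence discrete, and $K$ is compact), while $MU(D,Y)=Y^D$ is compact because $Y$ is finite; the product of a discrete space and a compact space is locally compact, which finishes this case.

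For the forward implication, suppose $MU(X,Y)$ is locally compact. Then it is locally $\sigma$-compact, and since $D_2(X,Y)$ is closed in it (Lemma~\ref{closed}), $D_2(X,Y)$ is locally compact, hence locally $\sigma$-compact. Lemma~\ref{lem:local} then yields that $X$ is a topological sum of a compact set $K$ and a discrete set $D$. I would also record the standing observation that $Y$ embeds as the closed subspace $\phi(Y)$ of $MU(X,Y)$ (Lemma~\ref{trivial1}), so $Y$ is always locally compact under our hypothesis. Now I split on whether $Y$ is discrete. If $Y$ is non-discrete, the second part of Lemma~\ref{lem:local} forces $X$ to be discrete; then either $Y$ is compact, giving case~(i), or $Y$ is non-compact, in which case Lemma~\ref{lem:Xcompact} makes $X$ compact, so $X$ is finite and we land in case~(iii). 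If instead $Y$ is discrete, then either $Y$ is finite and we are in case~(iv), or $Y$ is infinite, hence non-compact, so Lemma~\ref{lem:Xcompact} gives $X$ compact and we obtain case~(ii).

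The main obstacle is the converse for case~(iv): one must resist the temptation to invoke Theorem~\ref{comp} for $MU(K,Y)$ (which would wrongly require $K$ to be discrete) and instead recognize that $MU(K,Y)$ is merely \emph{discrete}, not compact, and then combine it with the compact factor $Y^D$ via the product decomposition of Lemma~\ref{lem:homeomorphism}. The remaining work is careful bookkeeping to ensure that the dichotomy on $Y$ (discrete versus non-discrete, then compact versus non-compact, or finite versus infinite) exhausts all possibilities without gaps.
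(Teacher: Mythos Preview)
Your proposal is correct and follows essentially the same approach as the paper's proof: both directions use Lemmas~\ref{lem:local}, \ref{lem:Xcompact}, \ref{lem:homeomorphism} together with Theorem~\ref{comp} and Proposition~\ref{thdiscrete} in the same way, and the sufficiency of case~(iv) is handled via the product decomposition $MU(K,Y)\times MU(D,Y)$ with one factor discrete and the other compact, exactly as in the paper. The only cosmetic difference is that the paper first disposes of the cases where $MU(X,Y)$ is already compact or discrete and then runs the dichotomy on $Y$, whereas you split directly on whether $Y$ is discrete; the two organizations cover the same ground.
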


\begin{proof}
($\Rightarrow$): If $MU(X,Y)$ is compact, then by Theorem~\ref{comp}, condition (i) holds. If $MU(X,Y)$ is discrete, then by Proposition~\ref{thdiscrete}, we have condition (ii).

Assume that $MU(X,Y)$ is a non-discrete locally compact non-compact space. Then $D_2(X,Y)$, being a closed subspace of $MU(X,Y)$, is locally compact. Lemma~\ref{lem:local} implies that $X$ is a topological sum of a compact space $K$ and a discrete space $D$.

Suppose that $Y$ is not discrete. We are going to show that, in this case, condition (iii) holds.
Lemma~\ref{lem:local} implies that $X$ is discrete. Lemma~\ref{trivial1} yields that $Y$ is locally compact. Since $MU(X,Y)$ is not compact, Theorem~\ref{comp} implies that $Y$ is not compact. By Lemma~\ref{lem:Xcompact}, $X$ is finite. Hence condition (iii) holds.

Suppose that $Y$ is discrete. If $Y$ is infinite, then Lemma~\ref{lem:Xcompact} implies that $X$ is compact. By Proposition~\ref{thdiscrete}, the space $MU(X,Y)$ is discrete, which contradicts our assumption. Hence $Y$ is finite, and condition (iv) holds.

$(\Leftarrow)$: Theorem~\ref{comp} implies that $MU(X,Y)$ is compact, if condition (i) holds. By Proposition~\ref{thdiscrete}, $MU(X,Y)$ is discrete, providing condition (ii) holds.
Assume condition (iii) holds. Then $MU(X,Y)=Y^X$ by Lemma~\ref{trivial}. Since finite product of locally compact spaces is locally compact, we get that $MU(X,Y)$ is locally compact.
Suppose condition (iv) holds, i.e., $X$ is a topological sum of a compact set $K$ and a discrete set $D$. If $K$ or $D$ is finite, then condition (i) or (ii) holds, respectively. So, we can assume that the sets $K$ and $D$ are infinite. By Lemma~\ref{lem:homeomorphism}, the space $MU(X,Y)$ is homeomorphic to the product of the discrete space $MU(K,Y)$ and the compact space $MU(D,Y)$. It follows that $MU(X,Y)$ is locally compact.
\end{proof}

The following theorem characterizes local $\sigma$-compactness of a space $MU(X,Y)$, complementing Theorem~\ref{locallycomp} about local compactness of $MU(X,Y)$.

\begin{theorem}\label{newlocal}
    A space $MU(X,Y)$ is locally $\sigma$-compact if and only if one of the following conditions holds:
    \begin{enumerate}[\rm (i)]
        \item $MU(X,Y)$ is locally compact;
        \item $Y$ is locally $\sigma$-compact, $X$ is finite.
    \end{enumerate}
\end{theorem}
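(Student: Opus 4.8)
The plan is to establish both implications from the already-proved characterizations, chiefly Theorem~\ref{locallycomp}, the structural Lemma~\ref{lem:local}, and Lemma~\ref{lem:Xcompact}. Two elementary permanence properties of local $\sigma$-compactness will be used repeatedly and are worth recording at the outset: (a) a closed subspace $C$ of a locally $\sigma$-compact space is locally $\sigma$-compact, because intersecting a $\sigma$-compact neighborhood $\bigcup_n N_n$ of a point with $C$ gives the neighborhood $\bigcup_n(N_n\cap C)$, and each $N_n\cap C$ is compact; and (b) a finite product of locally $\sigma$-compact spaces is locally $\sigma$-compact, because a product of finitely many $\sigma$-compact neighborhoods is again a $\sigma$-compact neighborhood.

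For the implication $(\Leftarrow)$ there is little to do. If (i) holds, every compact neighborhood is $\sigma$-compact, so $MU(X,Y)$ is locally $\sigma$-compact. If (ii) holds, then $X$ is finite and Hausdorff, hence discrete, so $MU(X,Y)=Y^X$ by Lemma~\ref{trivial}; this is a finite power of the locally $\sigma$-compact space $Y$, and property (b) finishes the argument.

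For the implication $(\Rightarrow)$, suppose $MU(X,Y)$ is locally $\sigma$-compact. If it is in fact locally compact then (i) holds, so I may assume it is \emph{not} locally compact and aim to verify (ii). By Lemma~\ref{trivial1}, $Y$ embeds as a closed subspace of $MU(X,Y)$ through the constant maps, so property (a) makes $Y$ locally $\sigma$-compact; it thus remains to prove that $X$ is finite. Since $D_2(X,Y)$ is closed in $MU(X,Y)$ (Lemma~\ref{closed}), property (a) shows it is locally $\sigma$-compact, and Lemma~\ref{lem:local} yields that $X$ is a topological sum of a compact set and a discrete set. The decisive step is to eliminate the possibility that $Y$ is discrete: a finite discrete $Y$ would put us in case (iv) of Theorem~\ref{locallycomp}, while an infinite discrete $Y$ is non-compact, so Lemma~\ref{lem:Xcompact} would force $X$ compact and place us in case (ii) of Theorem~\ref{locallycomp}; both outcomes contradict the assumption that $MU(X,Y)$ is not locally compact. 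Hence $Y$ is non-discrete, and the \emph{moreover} clause of Lemma~\ref{lem:local} gives that $X$ is discrete. Finally $Y$ cannot be compact, since a compact $Y$ over a discrete $X$ would make $MU(X,Y)$ compact by Theorem~\ref{comp}; so $Y$ is non-compact, Lemma~\ref{lem:Xcompact} forces $X$ compact, and a discrete compact space is finite. This is exactly (ii).

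I expect the only real obstacle to be the elimination of discrete $Y$: it is the single place demanding a genuine case analysis and a careful matching of the hypotheses against the four cases of Theorem~\ref{locallycomp} via Lemma~\ref{lem:Xcompact}. Everything else is either a direct appeal to a previous result or one of the two routine permanence facts (a) and (b).
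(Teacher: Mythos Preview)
Your proof is correct and follows essentially the same approach as the paper's. Both proofs hinge on the same four-way case split on whether $Y$ is discrete and/or compact, invoking the same lemmas (\ref{trivial1}, \ref{closed}, \ref{lem:local}, \ref{lem:Xcompact}) and Theorem~\ref{locallycomp}; you simply organize the forward direction contrapositively (assume not (i), derive (ii)) whereas the paper runs the four cases on $Y$ directly and shows each lands in (i) or (ii).
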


\begin{proof}
$(\Rightarrow)$: Lemma~\ref{closed} implies that $D_2(X,Y)$ is locally $\sigma$-compact. By Lemma~\ref{lem:local}, we have that $X$ is a topological sum of a compact space $K$ and a discrete space $D$. Lemma~\ref{trivial1} implies that $Y$ is locally $\sigma$-compact.

If $Y$ is neither discrete nor compact, then lemmas~\ref{closed} and~\ref{lem:local} imply that $X$ is discrete, and Lemma~\ref{lem:Xcompact} implies that $X$ is compact. Hence $X$ is finite, and condition (ii) holds.

If $Y$ is both discrete and compact, then $Y$ is finite and, by Theorem~\ref{locallycomp}(iv), $MU(X,Y)$ is locally compact.
If $Y$ is discrete but not compact, Lemma~\ref{lem:Xcompact} implies that $X$ is compact and, by Theorem~\ref{locallycomp}(ii), $MU(X,Y)$ is locally compact.
If $Y$ is compact but not discrete, lemmas~\ref{closed} and~\ref{lem:local} imply that $X$ is discrete and, by Theorem~\ref{locallycomp}(i), $MU(X,Y)$ is locally compact.

The implication $(\Leftarrow)$ follows from the fact that a finite product of locally $\sigma$-compact spaces is locally $\sigma$-compact.
\end{proof}

The following corollary of Theorem~\ref{newlocal} characterizes local $\sigma$-compactness of a space $MU(X,Y)$, for $X,Y$ being infinite. Also, it shows the equivalence between a local and a global property of a space $MU(X,Y)$.

%Note that in this case, a local property of a space $MU(X,Y)$ is equivalent to a global property.

\begin{corollary}
Let $X$ and $Y$ be infinite. $MU(X,Y)$ is locally $\sigma$-compact if and only if $MU(X,Y)$ is either compact or discrete.
\end{corollary}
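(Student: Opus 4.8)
The plan is to reduce the corollary to the two characterization theorems already proved, namely Theorem~\ref{newlocal} and Theorem~\ref{locallycomp}, and to use the two infiniteness hypotheses as a sieve that prunes the list of admissible cases down to exactly the compact and discrete ones. The reverse implication carries no content: a compact space is $\sigma$-compact, and in a discrete space every singleton is an open compact neighbourhood; in both situations every point has a $\sigma$-compact neighbourhood, so $MU(X,Y)$ is locally $\sigma$-compact. Thus all the work lies in the forward implication.

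For the forward direction I would first apply Theorem~\ref{newlocal}, which offers exactly two alternatives: either $MU(X,Y)$ is locally compact, or $Y$ is locally $\sigma$-compact and $X$ is finite. Since we assume $X$ infinite, the second alternative is impossible, and therefore $MU(X,Y)$ is locally compact. I would then feed this conclusion into Theorem~\ref{locallycomp}, whose four configurations (i)--(iv) exhaust the ways $MU(X,Y)$ can be locally compact. Here the hypothesis that $X$ is infinite eliminates case (iii) (which requires $X$ finite), and the hypothesis that $Y$ is infinite eliminates case (iv) (which requires $Y$ finite). Hence only (i) or (ii) can hold, and by the ``Moreover'' clause of Theorem~\ref{locallycomp} these force $MU(X,Y)$ to be compact, respectively discrete. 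This is precisely the claimed dichotomy, and since the ``Moreover'' clause is already established, no additional verification is needed for the surviving cases.

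The step I expect to be the genuine crux is not a computation but the recognition that \emph{both} infiniteness assumptions are indispensable, and understanding why. If one dropped ``$Y$ infinite'', case (iv) of Theorem~\ref{locallycomp} would remain live: there $X=K\oplus D$ is a topological sum of an infinite compact space $K$ and an infinite discrete space $D$, and by Lemma~\ref{lem:homeomorphism} together with Lemma~\ref{trivial} the space $MU(X,Y)$ becomes homeomorphic to the product of an infinite discrete factor with an infinite compact non-discrete factor. Such a product is locally compact (each point sits in a slice homeomorphic to the compact factor) yet is neither compact (the discrete factor is closed and not compact) nor discrete (the compact factor is a non-discrete subspace). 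Checking that this pathological case is exactly what the hypothesis $Y$ infinite removes is the conceptual heart of the argument; once it is ruled out, the remaining case analysis is purely bookkeeping against the already-proved theorems.
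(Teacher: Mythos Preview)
Your argument is correct and matches the paper's own proof essentially verbatim: use the infiniteness of $X$ in Theorem~\ref{newlocal} to force local compactness, then use both infiniteness hypotheses in Theorem~\ref{locallycomp} to rule out cases (iii) and (iv), leaving only compact or discrete. Your additional paragraph explaining why the hypothesis on $Y$ is indispensable is sound commentary but is not needed for the proof itself.
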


\begin{proof}
Since the space $X$ is infinite, Theorem~\ref{newlocal} implies that $MU(X,Y)$ is locally compact. Taking into account that the space $Y$ is infinite, Theorem~\ref{locallycomp} implies that $MU(X,Y)$ is either compact or discrete.
The reverse implication is trivial.
%If $MU(X,Y)$ is compact, then Theorem~\ref{comp} implies that $Y$ is compact and $X$ is discrete. In this case, by Lemma~\ref{trivial}, $MU(X,Y)=Y^X$. If $MU(X,Y)$ is discrete, then Lemma~\ref{thdiscrete} yields that $Y$ is discrete and $X$ is compact. 
\end{proof}

The following result will be crucial in the next section.

\begin{proposition}\label{precusco}
  Let $Y$ be a~topological vector space over $\R$ or $\C$ equipped with a~complete metric $d$. Then the following assertions are equivalent:
  \begin{enumerate}[\rm (1)]
    \item $MU(X,Y)$ is locally compact,
    \item $MU(X,Y)$ is $\sigma$-compact,
    \item $MU(X,Y)$ is locally $\sigma$-compact,
    \item $Y$ is finite-dimensional and $X$ is finite.
  \end{enumerate}
\end{proposition}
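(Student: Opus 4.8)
The plan is to take assertion (4) as the normal form and close a cycle of implications around it. The two converses (4)$\Rightarrow$(1) and (4)$\Rightarrow$(2) are routine, and (1)$\Rightarrow$(3), (2)$\Rightarrow$(3) are immediate: a compact neighborhood is $\sigma$-compact, and a $\sigma$-compact space is its own $\sigma$-compact neighborhood at every point. Thus the whole proposition reduces to the single substantial implication (3)$\Rightarrow$(4): if $MU(X,Y)$ is locally $\sigma$-compact, then $Y$ is finite-dimensional and $X$ is finite.

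For (3)$\Rightarrow$(4), I would first record two consequences of the standing hypothesis that $Y$ is a nontrivial topological vector space. First, $Y$ is not discrete: for $v\neq 0$ the map $t\mapsto tv$ is continuous and injective and tends to $0$, so $0$ is not isolated. Second, once $Y$ is known to be finite-dimensional and nontrivial it is linearly homeomorphic to some $\mathbb{R}^n$ or $\mathbb{C}^n$ with $n\geq 1$, hence non-compact. Now assume $MU(X,Y)$ is locally $\sigma$-compact. By Lemma~\ref{closed} the closed subspace $D_2(X,Y)$ is locally $\sigma$-compact, and since $Y$ is non-discrete, the "moreover" part of Lemma~\ref{lem:local} gives that $X$ is discrete. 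By Lemma~\ref{trivial1} the closed copy $\phi(Y)$ is locally $\sigma$-compact, so $Y$ itself is locally $\sigma$-compact.

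The functional-analytic heart of the argument is to upgrade this to finite-dimensionality, and this is the step where I expect the real difficulty and where completeness and the linear structure are crucial. I would first promote local $\sigma$-compactness to local compactness via a Baire argument: fix a $\sigma$-compact neighborhood $N=\bigcup_n K_n$ of $0$ with each $K_n$ compact (hence closed) and an open $V$ with $0\in V\subseteq N$; since $Y$ is completely metrizable it is a Baire space, $V$ is Baire as an open subspace, and $V=\bigcup_n (V\cap K_n)$ writes $V$ as a countable union of relatively closed sets, so some $V\cap K_{n_0}$ has nonempty interior, whose closure lies in $K_{n_0}$ and is therefore compact. By translation-homogeneity $0$ then has a compact neighborhood, i.e.\ $Y$ is locally compact. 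I then invoke the classical Riesz characterization — a Hausdorff topological vector space over $\mathbb{R}$ or $\mathbb{C}$ is locally compact if and only if it is finite-dimensional — to conclude that $Y$ is finite-dimensional, hence non-compact as noted. Applying Lemma~\ref{lem:Xcompact} with this non-compact $Y$ forces $X$ to be compact; being also discrete, $X$ is finite, which is exactly (4).

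For the converses I would argue that when (4) holds, $X$ is finite and Hausdorff, hence discrete, so $MU(X,Y)=Y^X$ by Lemma~\ref{trivial}; this is a finite power of $Y\cong\mathbb{R}^n$ or $\mathbb{C}^n$, which is both locally compact and $\sigma$-compact (finite products preserve both), yielding (1) and (2) and thus closing the loop. To summarize, the only nontrivial content is the passage from local $\sigma$-compactness to local compactness of $Y$ — where completeness enters precisely to make $Y$ Baire — combined with the Riesz theorem; everything else is bookkeeping with the structural lemmas already established, and the hypotheses of completeness and of being a vector space appear in the statement exactly because they are consumed in this one step.
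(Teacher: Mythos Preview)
Your proof is correct and follows essentially the same route as the paper: both reduce everything to (3)$\Rightarrow$(4), observe that $Y$ is neither compact nor discrete, use a Baire argument on the completely metrizable $Y$ to upgrade local $\sigma$-compactness to local compactness, and then invoke the Riesz characterization of finite-dimensional topological vector spaces. The only cosmetic difference is that the paper cites the packaged characterization theorems (Theorems~\ref{locallycomp} and~\ref{newlocal}) to conclude at once that $Y$ is locally $\sigma$-compact and $X$ is finite, whereas you unpack that step via the underlying Lemmas~\ref{closed}, \ref{lem:local}, \ref{trivial1}, and~\ref{lem:Xcompact} directly.
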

\begin{proof}
   The implications (1) $\Rightarrow$ (3) and (2) $\Rightarrow$ (3) are trivial.
   
   To prove (3) $\Rightarrow$ (4) assume that $MU(X,Y)$ is locally $\sigma$-compact. Since $Y$ is a non-trivial topological vector space, it is neither compact nor discrete. Then by theorems~\ref{locallycomp} and~\ref{newlocal} we have that $Y$ is locally $\sigma$-compact and $X$ is finite. 
   %Since $Y$ is topological vector space, it is easy to see that $Y$ is also $\sigma$-compact. 
   Since $Y$ is completely metrizable, then $Y$ is Baire, i.e., no open subset of $Y$ is meager. The local $\sigma$-compactness of $Y$ yields an open neighborhood $V$ of $0$, which is covered by a union of countably many compact sets. Then at least one of these sets has nonempty interior, witnessing that $Y$ is locally compact. By \cite[Proposition 4.13]{Fabian2013}, a topological vector space is locally compact if and only if it is finite-dimensional. %{\color{blue}It is easy to see that in the latter case, it is also $\sigma$-compact.}
   
   The implication (4) $\Rightarrow$ (1) follows from Lemma~\ref{trivial} and \cite[Proposition 4.13]{Fabian2013}, mentioned above.
   
The implication (4) $\Rightarrow$ (2) follows from Lemma~\ref{trivial}, \cite[Proposition 4.13]{Fabian2013} and the folklore fact that every locally compact topological vector space is $\sigma$-compact.
\end{proof}

%Let $Y$ be a topological vector space. Using Theorem~\ref{comp} it is easy to observe that $MU(X,Y)$ is compact if and only if $MU(X,Y)$ is singleton.

%Note that if $Y$ is a topological vector space, then compactness of $MU(X,Y)$ implies, by Theorem~\ref{comp}, that $Y$ is compact and hence trivial. In this case, $MU(X,Y)$ itself is trivial.

A space $Z$ is called
\begin{enumerate}[\rm (i)]
    \item {\em ccc} if every family of open pairwise disjoint subsets of $Z$ is countable;
    \item {\em locally ccc}, if every $z\in Z$ has a neighborhood which is a ccc subspace of $Z$.
\end{enumerate}

Recall that a metric space is ccc if and only if it is separable.
The following result complements Lemma~4.8 and Corollary 4.10 from \cite{Hola2021}. It also illustrates yet another equivalence between a local and a global property of a space $MU(X,Y)$.

\begin{theorem}\label{thm:ccc}
Let $Y$ be non-discrete and separable. The following conditions are equivalent:
\begin{enumerate}[\rm (1)]
\item $MU(X,Y)$ is ccc;
\item $MU(X,Y)$ is locally ccc;
\item $X$ is discrete.
\end{enumerate}
\end{theorem}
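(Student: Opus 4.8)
The plan is to establish the cycle $(1)\Rightarrow(2)\Rightarrow(3)\Rightarrow(1)$. The implication $(1)\Rightarrow(2)$ is immediate: every open subspace of a ccc space is ccc (a pairwise disjoint family of sets open in the subspace is also pairwise disjoint and open in the whole space), so a ccc space is itself a ccc neighborhood of each of its points. For $(3)\Rightarrow(1)$, if $X$ is discrete then by Lemma~\ref{trivial} we have $MU(X,Y)=Y^X$; since compact subsets of a discrete space are finite, the basic neighborhoods $W(F,K,\varepsilon)$ from Definition~\ref{def:tauuc} control only finitely many coordinates, so $\tau_{uc}$ coincides with the Tychonoff product topology on $Y^X$. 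As $Y$ is separable, the product $Y^X$ is ccc by the classical fact that an arbitrary product of separable spaces satisfies the countable chain condition (proved via the $\Delta$-system lemma applied to basic boxes, each depending on finitely many coordinates). Hence $MU(X,Y)$ is ccc.

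The entire content lies in $(2)\Rightarrow(3)$, which I would prove contrapositively: assuming $X$ is not discrete, I exhibit a point of $MU(X,Y)$ with no ccc neighborhood. Fix a non-isolated point $a\in Y$ (it exists since $Y$ is non-discrete) and consider the constant map $c_a$. It suffices to show that each basic neighborhood $W(c_a,K,\varepsilon)$ contains an uncountable pairwise disjoint family of nonempty open sets, since such a family, being open in any larger neighborhood too, witnesses that no neighborhood of $c_a$ is ccc. The building blocks are the maps $F^{a,b}_U$: choosing a second value $b$ close to $a$ makes all of them members of $W(c_a,K,\varepsilon)$ irrespective of $U$ (their values lie in $\{a,b\}\subseteq B(a,\varepsilon)$), while two such maps disagree with value-gap $d(a,b)$ precisely at points of $U\setminus\overline{U'}$ or $U'\setminus\overline{U}$.

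The construction then runs as follows. Fix a non-isolated point $x^*\in X$ and, by Lemma~\ref{lc} applied to $\{x^*\}$, a compact neighborhood $L$ of $x^*$. By a greedy separation argument — repeatedly choosing a point other than $x^*$ in the shrinking open neighborhood $O\setminus(\overline{G_0}\cup\dots\cup\overline{G_{n-1}})$ of the non-isolated point $x^*$ and, using regularity of $X$, peeling off an open set whose closure misses $x^*$ — I obtain a pairwise disjoint sequence $\{G_n:n\in\omega\}$ of nonempty open sets contained in $L$. For each $S\subseteq\omega$ put $U_S=\bigcup_{n\in S}G_n$. Whenever $S\neq S'$, pick $n\in S\setminus S'$ and $z\in G_n$: then $z\in U_S$, while $G_n$ is an open neighborhood of $z$ disjoint from $\overline{U_{S'}}$ (the slots being pairwise disjoint open sets), so $F^{a,b}_{U_S}(z)=\{a\}$ and $F^{a,b}_{U_{S'}}(z)=\{b\}$. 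Now choose $b$ with $0<d(a,b)<3\varepsilon/4$, set $\delta=d(a,b)/3$ and $L'=K\cup L$. The triangle inequality gives $W(F^{a,b}_{U_S},L',\delta)\subseteq W(c_a,K,\varepsilon)$, and for $S\neq S'$ the conflict point $z\in L'$ forbids a common member, so these sets are pairwise disjoint. This produces $2^{\omega}$ pairwise disjoint nonempty open subsets of $W(c_a,K,\varepsilon)$, finishing the contrapositive.

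I expect the main obstacle to be exactly this $(2)\Rightarrow(3)$ construction, and within it the need to reconcile two competing constraints simultaneously: the conflict maps must remain inside an arbitrary (hence possibly small, with arbitrarily large compact $K$) neighborhood of $c_a$, which forces the auxiliary value $b$ to be taken arbitrarily close to $a$ — this is precisely where non-isolatedness of $a$ in $Y$ is used — while all the pairwise conflicts must be detectable inside a single compact set, which is arranged by packing the disjoint slots $G_n$ into the compact neighborhood $L$ of $x^*$. Note that separability of $Y$ enters only in $(3)\Rightarrow(1)$, whereas non-discreteness of both $X$ and $Y$ drives $(2)\Rightarrow(3)$.
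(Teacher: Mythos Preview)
Your proof is correct and follows essentially the same approach as the paper. The only cosmetic differences are that the paper phrases $(2)\Rightarrow(3)$ as ``every compact subset of $X$ is finite'' and derives the contradiction from a single ccc neighborhood granted by the hypothesis, whereas you show directly that \emph{every} basic neighborhood of $c_a$ fails ccc; the construction of $2^\omega$ pairwise separated maps $F^{a,b}_{U_S}$ from disjoint open slots inside a compact set, with $b$ chosen close to $a$ using non-discreteness of $Y$, is identical in substance (the paper uses $\delta/4$ where you use $d(a,b)/3$).
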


\begin{proof}
The implication (1) $\Rightarrow$ (2) is obvious.

(2) $\Rightarrow$ (3).
Since $X$ is locally compact, it is sufficient to show that each compact subset of $X$ is finite.
Seeking a contradiction, assume that $X$ contains an infinite compact set $A$. It is easy to see that there exists a sequence $\{V_n:n\in\w\}$ of pairwise disjoint open sets such that $V_n\cap A\neq \emptyset$ for all $n\in\w$. %Let $A=\bigcup_{n\in\w}V_n$.
Since $Y$ is not discrete, it contains a nonisolated point $a$. Since $MU(X,Y)$ is locally ccc, there exists $\varepsilon>0$ and a compact space $B$ such that the subspace $W(\ca,B,\varepsilon)$ is ccc. Observe that for $K=A\cup B$ the subspace $W(\ca,K,\varepsilon)$ is ccc as well.  Fix a point $b\in Y$ such that $d(a,b)=\delta<\varepsilon/2$. For each function $h\colon\w\rightarrow \{a,b\}$, let
$W(h)=\bigcup_{n\in h^{-1}(a)}V_n$. Then $F_h= F_{W(h)}^{a,b}\in MU(X,Y).$ It is a routine to check that $W(F_h,K,\delta/4)\cap W(F_g,K,\delta/4)=\emptyset$ for any distinct $h,g\in \{a,b\}^\w$, and
\[\bigcup_{h\in \{a,b\}^\w}W(F_h,K,\delta/4)\subseteq W(\ca,K,\varepsilon).\]
It follows that $W(\ca,K,\varepsilon)$ is not ccc, which is a desired contradiction.
Hence each compact subset of $X$ is finite, and thus $X$ is discrete.

(3) $\Rightarrow$ (1). By Lemma~\ref{trivial}, $MU(X,Y)=Y^X$. Since $Y$ is separable, then for any finite subset $F\subseteq X$ the Tychonoff product $Y^F$, being separable and metrizable, is ccc.  By \cite[II.1.9]{Kunen2014}, the space $MU(X,Y)$ is ccc.
\end{proof}

For a partially ordered set $(P,\le)$ let
\[\cf(P,\le)=\w+\min\{|Q|:Q\text{ is cofinal in }(P,\le)\}.\]
%Recall that $K(Z)$ is the family of all compact subsets of a space $Z$.
A {\em tightness} of a space $Z$ is the minimal cardinality $t(Z)\geq \omega$ such that for every $z\in Z$ and every $A\subseteq Z$ with $z\in\cl A$ there is $A_0\subseteq A$ satisfying $|A_0|\le t(Z)$ and $z\in\cl A_0$.
A {\em character} of a space $Z$ is the minimal cardinality $\chi(Z)\geq \omega$ such that every $z\in Z$ has an open neighborhood base of cardinality at most $\chi(Z)$.
If $(Z,\U)$ is a uniform space, then by $w(Z,\U)$ we denote the {\em weight} of the uniformity $\U$, i.e., $w(Z,\U)=\cf(\U,\supseteq)$.

The next theorem extends Theorem~4.2 from \cite{Hola2021}.

%\begin{theorem}[Hol\'a, Hol\'y]
%    $\chi(MU(X,\mathbb R))=\cf(K(X),\subseteq)$.
%\end{theorem}

\begin{proposition}\label{tight}
The following equalities hold:
  \[t(MU(X,Y))=\chi(MU(X,Y))=w(MU(X,Y),\U_{uc})=\cf(K(X),\subseteq).\]
\end{proposition}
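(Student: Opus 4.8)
The plan is to prove the four quantities equal by closing the cycle of inequalities
\[
\cf(K(X),\subseteq)\ \le\ t(MU(X,Y))\ \le\ \chi(MU(X,Y))\ \le\ w(MU(X,Y),\U_{uc})\ \le\ \cf(K(X),\subseteq).
\]
The two middle inequalities are soft and general: tightness is always bounded by character, and for a uniform space the character is bounded by the weight of the uniformity, since a base $\mathcal B$ of $\U_{uc}$ of minimal cardinality yields the neighbourhood base $\{V[F]:V\in\mathcal B\}$ at every point $F$. Thus the real content lies in the first and last inequalities, both of which convert the uniformity $\U_{uc}$ into information about the poset $(K(X),\subseteq)$.

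For $w(MU(X,Y),\U_{uc})\le\cf(K(X),\subseteq)$ I would fix a cofinal family $\mathcal{K}\subseteq K(X)$ with $|\mathcal{K}|\le\cf(K(X),\subseteq)$ and verify that
\[
\{\,W[K,1/n]:K\in\mathcal{K},\ n\ge 1\,\}
\]
is a base of $\U_{uc}$. Given a basic entourage $W[L,\varepsilon]$ as in Definition~\ref{def:tauuc}, choose $K\in\mathcal{K}$ with $L\subseteq K$ and $n$ with $1/n<\varepsilon$; then $W[K,1/n]\subseteq W[L,\varepsilon]$, because enlarging the compact set and shrinking $\varepsilon$ only refines the entourage. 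This base has cardinality at most $\max(|\mathcal{K}|,\omega)\le\cf(K(X),\subseteq)$, which yields the bound.

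The heart of the argument is $\cf(K(X),\subseteq)\le t(MU(X,Y))$, which I would extract from a single well-chosen instance of tightness. Fix distinct $a,b\in Y$ (possible since $Y$ is non-trivial) and set $\delta=d(a,b)$. For each $K\in K(X)$ use Lemma~\ref{lc} to pick an open $U_K\supseteq K$ with $\cl{U_K}$ compact, and put $A=\{F^{a,b}_{U_K}:K\in K(X)\}\subseteq MU(X,Y)$. As $F^{a,b}_{U_K}(x)=\{a\}$ for every $x$ in the open set $U_K\supseteq K$, each $F^{a,b}_{U_K}$ belongs to $W(\ca,K,\varepsilon)$, and letting $K$ range over all compacta shows $\ca\in\cl A$. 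Tightness now gives $A_0\subseteq A$ with $|A_0|\le t(MU(X,Y))$ and $\ca\in\cl{A_0}$; write $A_0=\{F^{a,b}_{U_K}:K\in\mathcal{K}_0\}$ with $|\mathcal{K}_0|\le t(MU(X,Y))$. I then claim $\{\cl{U_K}:K\in\mathcal{K}_0\}$ is cofinal. Indeed, for an arbitrary compact $L$ the neighbourhood $W(\ca,L,\delta/2)$ meets $A_0$, so some $F^{a,b}_{U_K}\in A_0$ satisfies $H_d(F^{a,b}_{U_K}(x),\{a\})<\delta/2$ for all $x\in L$; since $F^{a,b}_{U_K}$ takes the constant value $\{b\}$ on the open set $X\setminus\cl{U_K}$, at Hausdorff distance $\delta$ from $\{a\}$, no point of $L$ can lie there, i.e.\ $L\subseteq\cl{U_K}$. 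Hence $\cf(K(X),\subseteq)\le\omega+|\mathcal{K}_0|\le t(MU(X,Y))$, closing the cycle.

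The main obstacle is exactly this last inequality: one must manufacture a concrete cofinal family of compacta out of the purely abstract tightness hypothesis, and the crucial computational point to check carefully is that closeness of $F^{a,b}_{U_K}$ to the constant map $\ca$ on a compact set $L$ forces $L\subseteq\cl{U_K}$. This in turn rests on identifying the value of $F^{a,b}_{U_K}$ on $X\setminus\cl{U_K}$, which follows from its description as the graph closure of a locally constant function. The remaining three inequalities need no hypotheses on $X$ or $Y$ beyond the standing assumptions of local compactness and metrizability.
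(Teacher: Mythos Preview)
Your proof is correct and follows essentially the same route as the paper: the same chain of inequalities, the same description of a base of $\U_{uc}$ indexed by a cofinal family of compacta, and the same tightness trick of approximating $\ca$ by two-valued minimal usco maps $F^{a,b}_{U}$ and reading off a cofinal family of compacta from a small subfamily whose closure contains $\ca$. The only cosmetic difference is that the paper parametrises the approximating family as $\{F^{a,b}_{\intr K}:K\in K(X)\}$ whereas you use open sets $U_K\supseteq K$ with compact closure; your final cofinality step (``$L\subseteq\cl{U_K}$ because the map equals $\{b\}$ on $X\setminus\cl{U_K}$'') is exactly the computation the paper compresses into the phrase ``the choice of $\varepsilon$ implies $K\subseteq K_\alpha$.''
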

\begin{proof}
  The uniformity $\U_{uc}$ is generated by $W[K,\frac{1}{n+1}]$, where $K$ runs through a cofinal subset $\K$ of $K(X)$ and $n\in\omega$. At this point, it is easy to see that the following inequalities hold:
  \[t(MU(X,Y))\le \chi(MU(X,Y))\le w(MU(X,Y),\U_{uc}) \le \cf(K(X),\subseteq).\]

Let $\lambda=t(MU(X,Y))$.
  To prove $\cf(K(X),\subseteq)\le \lambda$, fix two distinct points $a,b\in Y$ and consider the family $\F=\{F^{a,b}_{\intr K}:K\in \K\}$. Since $X$ is locally compact, by Lemma~\ref{lc}, for every $K\in K(X)$ there is a compact $Q$ such that $K\subseteq \intr Q$. Thus $F^{a,b}_{\intr Q}\in\F\cap W(\ca ,K,\varepsilon)$, for every $\varepsilon >0$. It follows that $\ca \in\cl\F$. There exists a subfamily $\F_0=\{F^{a,b}_{\intr K_\alpha}:\alpha<\lambda\}\subseteq \F$ such that $\ca \in\cl{\F_0}$. In order to show that the family $\{K_\alpha: \alpha<\lambda\}$ is cofinal in $K(X)$, fix an arbitrary $K\in K(X)$ and positive $\varepsilon <d(a,b)$. Since $\ca \in\cl{\F_0}$, there exists $\alpha<\lambda$  such that $F^{a,b}_{\intr K_\alpha}\in W(\ca,K,\varepsilon)$. The choice of $\varepsilon$ implies that $K\subseteq K_\alpha$. Thus the family $\{K_\alpha:\alpha<\lambda\}$ is cofinal in $K(X)$.
\end{proof}

%A topological space $Z$ is called {\em locally metrizable}, if for every $z\in Z$ there exists an open neighborhood $U$ of $z$ such that the subspace $U$ is metrizable. 
A space $Z$ is called 
\begin{enumerate}[\rm (i)]
\item {\em Fréchet-Urysohn} if for every $E\subseteq Z$ and $z\in\overline{E}$ there is a~sequence $(z_n)$ in~$E$ converging to~$z$;
\item {\em sequential}, if for every non-closed set $A\subset Z$ there exists a convergent sequence $(z_n)\subseteq A$ whose limit is not in $A$;
\item {\em hemicompact} if $\cf(K(Z),\subseteq)=\w$.
\end{enumerate}

The following theorem generalizes Corollary~4.5 from \cite{Hola2021}, where the equivalence of items (1), (2), and (6) was shown for $Y=\mathbb R$. 

\begin{theorem}\label{cor:metrizable}
The following conditions are equivalent:
    \begin{enumerate}[\rm (1)]
      \item $MU(X,Y)$ is metrizable,
      %\item $MU(X,Y)$ is locally metrizable,
      \item $MU(X,Y)$ is first countable,
      \item $MU(X,Y)$ is Fréchet-Urysohn,
      \item $MU(X,Y)$ is sequential,
      \item $MU(X,Y)$ has countable tightness,
      \item $X$ is hemicompact.
    \end{enumerate}
\end{theorem}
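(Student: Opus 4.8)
The plan is to let Proposition~\ref{tight} do essentially all of the work. That proposition identifies four cardinal invariants as a single quantity,
\[
t(MU(X,Y))=\chi(MU(X,Y))=w(MU(X,Y),\U_{uc})=\cf(K(X),\subseteq),
\]
so the six conditions will collapse into a cycle once I interleave the generic topological implications among metrizability-type properties with two direct translations through this chain of equalities. Concretely, I would prove the cycle $(1)\Rightarrow(2)\Rightarrow(3)\Rightarrow(4)\Rightarrow(5)\Rightarrow(6)\Rightarrow(1)$.

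First I would record the implications $(1)\Rightarrow(2)\Rightarrow(3)\Rightarrow(4)\Rightarrow(5)$, none of which uses anything specific to $MU(X,Y)$: in any Hausdorff space, metrizable $\Rightarrow$ first countable $\Rightarrow$ Fréchet-Urysohn $\Rightarrow$ sequential $\Rightarrow$ countable tightness. These are standard and require no computation.

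Next I would close the cycle using Proposition~\ref{tight}. For $(5)\Rightarrow(6)$: countable tightness means $t(MU(X,Y))=\w$, and since $t(MU(X,Y))=\cf(K(X),\subseteq)$ we get $\cf(K(X),\subseteq)=\w$, which is precisely hemicompactness of $X$. For $(6)\Rightarrow(1)$: if $X$ is hemicompact then $\cf(K(X),\subseteq)=\w$, so by the proposition $w(MU(X,Y),\U_{uc})=\w$, i.e.\ the uniformity $\U_{uc}$ restricted to $MU(X,Y)$ admits a countable base. The uniformity is moreover separated, since two distinct minimal usco maps differ at some point $x\in X$, and then $H_d(F(x),G(x))>0$ excludes the pair from $W[\{x\},\varepsilon]$ for small $\varepsilon$. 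Invoking the metrization theorem for uniform spaces (a Hausdorff uniform space with a countable base for its uniformity is metrizable), we conclude that $MU(X,Y)$ is metrizable, which is condition~(1).

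I do not expect any serious obstacle, precisely because the arithmetic content is already isolated in Proposition~\ref{tight}. The only step meriting care is the application of the uniform metrization theorem in $(6)\Rightarrow(1)$: one must check that the inherited uniformity on $MU(X,Y)$ is separated and has a countable base, the former being immediate from the definition of $\U_{uc}$ and the latter being exactly the statement $w(MU(X,Y),\U_{uc})=\w$ supplied by the proposition.
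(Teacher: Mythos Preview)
Your proposal is correct and follows essentially the same approach as the paper: the chain $(1)\Rightarrow(2)\Rightarrow(3)\Rightarrow(4)\Rightarrow(5)$ is recorded as standard, $(5)\Rightarrow(6)$ is read off from Proposition~\ref{tight}, and $(6)\Rightarrow(1)$ comes from hemicompactness yielding a countable base for $\U_{uc}$ and hence metrizability. The paper is terser (it does not explicitly route $(6)\Rightarrow(1)$ back through Proposition~\ref{tight} or verify separatedness), but the argument is the same.
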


\begin{proof}
The chain of implications $(1) \Rightarrow (2) \Rightarrow (3) \Rightarrow (4) \Rightarrow (5)$ is trivial. The implication $(5) \Rightarrow (6)$ follows from Proposition~~\ref{tight}. One can easily see that if $X$ is hemicompact, then $\U_{uc}$ has a countable base, so the space $MU(X,Y)$ is metrizable. 
%Compatible metric can be found in \cite{Hola2023}.
\end{proof}

A space $Z$ is called {\em locally second-countable} if every $z\in Z$ has a neighborhood which is a second-countable subspace of $Z$.

\begin{theorem}\label{thm:loc2count}
Let $Y$ be non-discrete and separable. The following conditions are equivalent:
\begin{enumerate}[\rm (1)]
\item $MU(X,Y)$ is second-countable metrizable;
\item $MU(X,Y)$ is locally second-countable;
\item $X$ is countable discrete.
\end{enumerate}
\end{theorem}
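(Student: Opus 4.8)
The plan is to route everything through the two characterizations already established: Theorem~\ref{thm:ccc}, in which ccc, local ccc, and discreteness of $X$ coincide, and Theorem~\ref{cor:metrizable}, in which metrizability, first countability, and hemicompactness of $X$ coincide. The implication $(1)\Rightarrow(2)$ is immediate, since a second-countable space witnesses its own local second-countability at every point. Thus the real work splits into $(2)\Rightarrow(3)$ and $(3)\Rightarrow(1)$.

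For $(2)\Rightarrow(3)$ I would extract two consequences of local second-countability separately. First, every second-countable space is ccc, so a locally second-countable space is locally ccc; since $Y$ is non-discrete and separable, Theorem~\ref{thm:ccc} then forces $X$ to be discrete. Second, local second-countability implies first countability: passing to the interior I may assume the second-countable neighborhood $U$ of a point $z$ is open in $MU(X,Y)$, and then the members of a countable base of $U$ that contain $z$ form a countable neighborhood base of $z$ in the whole space. By Theorem~\ref{cor:metrizable}, first countability gives that $X$ is hemicompact, i.e. $\cf(K(X),\subseteq)=\omega$. It remains to merge the two conclusions: when $X$ is discrete its compact subsets are precisely its finite subsets, so a countable cofinal family in $K(X)$ covers $X$ by countably many finite sets, whence $X$ is countable. (Alternatively, once $X$ is discrete one has $MU(X,Y)=Y^X$ with the product topology by Lemma~\ref{trivial}, and an uncountable power of the non-trivial space $Y$ contains a copy of $2^{X'}$ for a cofinite, hence uncountable, $X'$ inside every neighborhood; since $2^{X'}$ is not second-countable and second-countability is hereditary, this yields countability directly.)

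For $(3)\Rightarrow(1)$, assume $X$ is countable and discrete. Then $X$ is hemicompact, so $MU(X,Y)$ is metrizable by Theorem~\ref{cor:metrizable}; and since $X$ is discrete while $Y$ is non-discrete and separable, $MU(X,Y)$ is ccc by Theorem~\ref{thm:ccc}. A metrizable ccc space is separable, and a separable metrizable space is second-countable, which is exactly $(1)$. The only genuinely delicate point in the whole argument is the bookkeeping in $(2)\Rightarrow(3)$: one must keep the two implications \emph{locally second-countable $\Rightarrow$ locally ccc} and \emph{locally second-countable $\Rightarrow$ first countable} separate, feed them into the two different structure theorems, and only then combine discreteness with hemicompactness to pin down countability. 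Everything else reduces to a direct appeal to the cited results.
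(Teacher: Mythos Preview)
Your proof is correct and, for $(1)\Rightarrow(2)$ and $(2)\Rightarrow(3)$, follows exactly the paper's route: extract local ccc and first countability from local second-countability, feed them into Theorem~\ref{thm:ccc} and Theorem~\ref{cor:metrizable} respectively, and combine discreteness with hemicompactness to get countability. For $(3)\Rightarrow(1)$ the paper is slightly more direct: instead of invoking the two structure theorems again and passing through ``metrizable $+$ ccc $\Rightarrow$ separable $\Rightarrow$ second-countable'', it simply identifies $MU(X,Y)$ with the countable product $Y^X$ via Lemma~\ref{trivial} and notes that a countable product of second-countable metrizable spaces is second-countable metrizable. Both arguments are elementary; yours has the pleasant symmetry of reusing the same two characterizations in both directions, while the paper's avoids the extra deduction chain.
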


\begin{proof}
The implication (1) $\Rightarrow$ (2) is trivial.

(2) $\Rightarrow$ (3):  Assume that $MU(X,Y)$  is locally second-countable. Then $MU(X,Y)$  is first-countable and locally ccc.  By Corollary~\ref{cor:metrizable}, $X$ is hemicompact.  Theorem~\ref{thm:ccc} implies that $X$ is discrete. Hence $X$ is countable.

(3) $\Rightarrow$ (1):
The separability of $Y$ implies that $Y$ is second-countable. Since $X$ is countable discrete, $MU(X,Y)=Y^X$ is a countable product of second-countable metrizable spaces, which is also second-countable and metrizable.
\end{proof}

Note that the latter two results provide another equivalence between local and global properties of the space $MU(X,Y)$.

\section{Minimal cusco maps}\label{cusco}

A map $F$ from a space $X$ to a topological vector space $Y$ over $\R$ or $\C$ is called {\em cusco} if it is usco and $F(x)$ is convex for every $x \in X$. Moreover, $F$ is {\em minimal cusco} if it is minimal with respect to the inclusion among all cusco maps from $X$ to $Y$. While working with cusco maps it is convenient to assume that the vector space $Y$ is additionally locally convex, in particular, this assumption has been customary in \cite{Hola2021a} and \cite{Hola2023}.
So, in this section, we by default assume that $X$ is a locally compact space and {\em $Y$ is a non-trivial locally convex topological vector space over $\R$ or $\C$, equipped with a compatible metric $d$}.

Let $MC(X,Y)$ be the space of all minimal cusco maps from $X$ to $Y$ equipped with the
subspace topology inherited from $(K(Y)^X,\tau_{uc})$ (see Definition~\ref{def:tauuc}).
Since we assume that $X$ is a locally compact space and $Y$ is a topological vector space, by \cite[Theorem 7.6]{Hola2020}, both $MU(X,Y)$ and $MC(X,Y)$ can be turned into topological vector spaces by defining appropriate operations.
If $Y$ is a complete metric space, then, by \cite[Theorem 4.5]{Hola2022a}, $MU(X,Y)$ and $MC(X,Y)$ are homeomorphic (even isomorphic as topological vector spaces). Thus, we have the following consequence of Proposition~\ref{precusco}.

\begin{corollary}
  If the metric on $Y$ is complete, then the following assertions are equivalent:
  \begin{enumerate}[\rm (1)]
    \item $MC(X,Y)$ is locally compact,
    \item $MC(X,Y)$ is $\sigma$-compact,
    \item $MC(X,Y)$ is locally $\sigma$-compact,
    \item $Y$ is finite-dimensional and $X$ is finite.
  \end{enumerate}
\end{corollary}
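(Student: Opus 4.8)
The plan is to obtain the corollary as an immediate transfer of Proposition~\ref{precusco} across the homeomorphism between $MU(X,Y)$ and $MC(X,Y)$. First I would check that the hypotheses of Proposition~\ref{precusco} are met. Under the standing assumptions of this section, $Y$ is a non-trivial topological vector space over $\R$ or $\C$ equipped with a compatible metric $d$; the corollary adds precisely the hypothesis that $d$ is complete. Thus $Y$ is a topological vector space over $\R$ or $\C$ with a complete metric, which is exactly what Proposition~\ref{precusco} requires (local convexity, assumed throughout this section, is harmless additional structure). Consequently Proposition~\ref{precusco} applies to $MU(X,Y)$ and yields that ``$MU(X,Y)$ is locally compact'', ``$MU(X,Y)$ is $\sigma$-compact'', ``$MU(X,Y)$ is locally $\sigma$-compact'', and ``$Y$ is finite-dimensional and $X$ is finite'' are all equivalent.

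Next I would invoke the homeomorphism. By \cite[Theorem 4.5]{Hola2022a}, since the metric on $Y$ is complete, the spaces $MU(X,Y)$ and $MC(X,Y)$ are homeomorphic. Local compactness, $\sigma$-compactness, and local $\sigma$-compactness are topological properties, hence preserved by homeomorphisms. Therefore each of conditions (1), (2), (3) of the corollary holds for $MC(X,Y)$ if and only if the corresponding statement holds for $MU(X,Y)$. Condition (4) is phrased intrinsically in terms of $X$ and $Y$ and so is literally identical for the two spaces. Splicing together the equivalences from Proposition~\ref{precusco} for $MU(X,Y)$ with these observations gives the chain of equivalences (1)$\Leftrightarrow$(2)$\Leftrightarrow$(3)$\Leftrightarrow$(4) for $MC(X,Y)$, which is the assertion of the corollary.

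There is essentially no genuine obstacle here; the corollary is a routine consequence of the two quoted results. The only points that deserve explicit mention are that the homeomorphism of \cite[Theorem 4.5]{Hola2022a} is exactly the place where completeness of $d$ is used (matching the extra hypothesis of the corollary), and that the three compactness-type properties being transported are homeomorphism invariants, so no structure beyond the topological identification is needed. I would state the proof in just a few lines, citing Proposition~\ref{precusco} and \cite[Theorem 4.5]{Hola2022a}.
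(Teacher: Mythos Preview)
Your proposal is correct and matches the paper's approach exactly: the paper derives the corollary directly from Proposition~\ref{precusco} via the homeomorphism $MU(X,Y)\cong MC(X,Y)$ of \cite[Theorem 4.5]{Hola2022a}, available precisely because the metric on $Y$ is complete. Your additional remarks (that the three compactness-type conditions are topological invariants and that completeness is used only to invoke the homeomorphism) are accurate and flesh out what the paper leaves implicit.
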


\begin{definition}
For each $a,b\in Y$ and open subset $U$ of $X$, define a minimal cusco map $G^{a,b}_U$ by $G^{a,b}_U(x)=\clco [F^{a,b}_U(x)]:=$ the closed convex hull of $F^{a,b}_U(x)$.
Indeed, the values of $G^{a,b}_U$ are either $\{a\},\{b\}$ or a line segment from $a$ to $b$, which is always compact. Thus, bearing in mind that $Y$ is locally convex, by \cite[Theorem 2]{Hola2023}, we have $G^{a,b}_U\in MC(X,Y)$.
\end{definition}

\begin{theorem}\label{thm:cccc}
Let $Y$ be separable. The following conditions are equivalent:
\begin{enumerate}[\rm (1)]
\item $MC(X,Y)$ is ccc;
\item $MC(X,Y)$ is locally ccc;
\item $X$ is discrete.
\end{enumerate}
\end{theorem}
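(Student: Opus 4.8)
The plan is to mirror the structure of Theorem~\ref{thm:ccc} very closely, since the $MC$ setting differs from the $MU$ setting only in that we replace the minimal usco maps $F^{a,b}_U$ by their closed-convex-hull counterparts $G^{a,b}_U$, which by the preceding definition are genuine minimal cusco maps. I expect the implication $(1)\Rightarrow(2)$ to be immediate, and $(3)\Rightarrow(1)$ to go through essentially verbatim: when $X$ is discrete, $MC(X,Y)=Y^X$ (the cusco analogue of Lemma~\ref{trivial}, valid because over a discrete index set every value is independently a compact convex set and minimality is automatic), and since $Y$ is separable metrizable the product is ccc by the standard fact that a product of separable spaces is ccc (\cite[II.1.9]{Kunen2014}).

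The heart of the matter is $(2)\Rightarrow(3)$, and here I would reproduce the combinatorial argument from the proof of Theorem~\ref{thm:ccc}. First, since $X$ is locally compact, it suffices to show every compact subset of $X$ is finite, so seeking a contradiction I would fix an infinite compact $A\subseteq X$ and extract a sequence $\{V_n:n\in\omega\}$ of pairwise disjoint open sets each meeting $A$. Because $Y$ is a non-trivial (hence non-discrete) topological vector space, I may fix a point $a\in Y$ and arbitrarily close distinct points; using local ccc-ness at $c_a$ I obtain $\varepsilon>0$ and a compact $B$ with $W(c_a,B,\varepsilon)$ ccc, and then $W(c_a,K,\varepsilon)$ is ccc for $K=A\cup B$. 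Choosing $b$ with $d(a,b)=\delta<\varepsilon/2$ and setting, for each $h\in\{a,b\}^\omega$, $W(h)=\bigcup_{n\in h^{-1}(a)}V_n$, I would form the maps $G_h=G^{a,b}_{W(h)}\in MC(X,Y)$ and check that the basic neighborhoods $W(G_h,K,\delta/4)$ are pairwise disjoint for distinct $h$ yet all lie inside $W(c_a,K,\varepsilon)$, contradicting ccc-ness.

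The one genuinely new point to verify—the step I expect to be the main obstacle—is that passing from $F^{a,b}_U$ to $G^{a,b}_U$ does not disturb the disjointness and containment estimates. The containment $W(G_h,K,\delta/4)\subseteq W(c_a,K,\varepsilon)$ and the pairwise disjointness both rest on computing Hausdorff distances between the values of these maps on the points of $K$. Since on each $V_n$ the map $G^{a,b}_{W(h)}$ takes value either $\{a\}$, $\{b\}$, or the segment $[a,b]$, and $H_d(\{a\},\{b\})=H_d(\{a\},[a,b])=d(a,b)=\delta$ while $H_d(\{a\},\{a\})=0$, the relevant distances are unchanged from the usco case: two maps $G_h,G_g$ with $h(n)\neq g(n)$ differ by exactly $\delta$ on points of $V_n\cap A$, and each $G_h$ is within $\delta$ of $c_a$ everywhere. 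Hence the separation constant $\delta/4$ still forces the neighborhoods apart, and the whole argument closes exactly as before.
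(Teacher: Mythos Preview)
Your proposal is correct and takes essentially the same approach as the paper: the paper also proves $(2)\Rightarrow(3)$ by replacing the maps $F^{a,b}_{W(h)}$ from Theorem~\ref{thm:ccc} with their convex-hull counterparts $G^{a,b}_{W(h)}$, and dismisses $(3)\Rightarrow(1)$ as ``similar to the one in the proof of Theorem~\ref{thm:ccc}''. Your discussion of the Hausdorff-distance estimates for the values $\{a\}$, $\{b\}$, $[a,b]$ is in fact more explicit than the paper's, which simply calls the verification ``routine''.
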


\begin{proof}
The implication (1) $\Rightarrow$ (2) is obvious. The proof of the implication (3) $\Rightarrow$ (1) is similar to the one in the proof of Theorem~\ref{thm:ccc}.

(2) $\Rightarrow$ (3).
Since $X$ is locally compact, it is sufficient to show that each compact subset of $X$ is finite.
Seeking a contradiction, assume that $X$ contains an infinite compact set $A$. It is easy to see that there exists a sequence $\{V_n:n\in\w\}$ of pairwise disjoint open sets such that $V_n\cap A\neq \emptyset$ for all $n\in\w$.

Since $Y$ is not discrete, it contains a nonisolated point $a$. Since $MC(X,Y)$ is locally ccc, there exists $\varepsilon>0$ and a compact space $B$ such that the subspace $W(\ca,B,\varepsilon)$ is ccc. Observe that for $K=A\cup B$ the subspace $W(\ca,K,\varepsilon)$ is ccc as well.  Fix a point $b\in Y$ such that $d(a,b)=\delta<\varepsilon/2$. For each function $h\colon\w\rightarrow \{a,b\}$, let
$W(h)=\bigcup_{n\in h^{-1}(a)}V_n$. Then the map $G_h= G_{W(h)}^{a,b}\in MC(X,Y).$ It is a routine to check that $W(G_h,K,\delta/4)\cap W(G_g,K,\delta/4)=\emptyset$ for any distinct $h,g\in \{a,b\}^\w$, and
\[\bigcup_{h\in \{a,b\}^\w}W(G_h,K,\delta/4)\subseteq W(\ca,K,\varepsilon).\]
It follows that $W(\ca,K,\varepsilon)$ is not ccc, which is a desired contradiction.
Hence each compact subset of $X$ is finite, and thus $X$ is discrete.
\end{proof}

The proof of the following theorem is analogical to the one of Proposition \ref{tight}, with a single change: one should consider the family $\G=\{G^{a,b}_{\intr K}:K\in \K\}$ instead of the family $\F=\{F^{a,b}_{\intr K}:K\in \K\}$.

\begin{proposition}
The following equalities hold: 
   \[t(MC(X,Y))=\chi(MC(X,Y))=w(MC(X,Y),\U_{uc})=\cf(K(X),\subseteq).\]
\end{proposition}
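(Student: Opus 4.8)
The plan is to follow the proof of Proposition~\ref{tight} essentially verbatim, substituting the minimal cusco maps $G^{a,b}_U$ for the minimal usco maps $F^{a,b}_U$; the only thing that genuinely needs checking is that this substitution disturbs none of the Hausdorff-distance estimates.

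First I would establish the chain of inequalities
\[t(MC(X,Y))\le \chi(MC(X,Y))\le w(MC(X,Y),\U_{uc}) \le \cf(K(X),\subseteq).\]
The first two are the standard bounds $t\le\chi\le w$ (the first valid in any space, the second in any uniform space, since the weight of the uniformity furnishes a neighborhood base at each point). The last one holds because $\U_{uc}$ is generated by the entourages $W[K,\frac{1}{n+1}]$ with $K$ ranging over a cofinal family $\K\subseteq K(X)$ and $n\in\omega$, so a cofinal subfamily of $K(X)$ of cardinality $\cf(K(X),\subseteq)$ produces a base of $\U_{uc}$ of the same size.

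Next, set $\lambda=t(MC(X,Y))$ and aim to prove $\cf(K(X),\subseteq)\le\lambda$. Fix distinct $a,b\in Y$ and consider the family $\G=\{G^{a,b}_{\intr K}:K\in\K\}$. The key observation is that for open $U$ the map $G^{a,b}_U$ takes the singleton value $\{a\}$ on $U$ and $\{b\}$ on $X\setminus\cl U$, exactly as $F^{a,b}_U$ does, since $\clco\{a\}=\{a\}$ and $\clco\{b\}=\{b\}$; passing to the closed convex hull alters the values only on $\partial U$, which the argument never touches. Consequently, for any $K\in K(X)$, choosing (via Lemma~\ref{lc}) a compact $Q$ with $K\subseteq\intr Q$, the map $G^{a,b}_{\intr Q}$ is identically $\{a\}$ on $K$, whence $G^{a,b}_{\intr Q}\in\G\cap W(\ca,K,\varepsilon)$ for every $\varepsilon>0$. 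This shows $\ca\in\cl\G$.

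By the definition of tightness I then extract a subfamily $\G_0=\{G^{a,b}_{\intr K_\alpha}:\alpha<\lambda\}$ with $\ca\in\cl{\G_0}$, and it remains to verify that $\{K_\alpha:\alpha<\lambda\}$ is cofinal in $K(X)$. Given $K\in K(X)$ and $0<\varepsilon<d(a,b)$, tightness supplies $\alpha<\lambda$ with $G^{a,b}_{\intr K_\alpha}\in W(\ca,K,\varepsilon)$; were some $x\in K$ outside $K_\alpha$, then $x\notin\cl{\intr K_\alpha}$, so $G^{a,b}_{\intr K_\alpha}(x)=\{b\}$ and $H_d(\{a\},\{b\})=d(a,b)>\varepsilon$, contradicting membership in $W(\ca,K,\varepsilon)$. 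Hence $K\subseteq K_\alpha$, cofinality follows, and $\cf(K(X),\subseteq)\le\lambda$. The sole point that is new relative to Proposition~\ref{tight}, and the only place a difficulty could hide, is confirming that taking closed convex hulls leaves the values on $\intr U$ and on $X\setminus\cl U$ unchanged, so that every distance computation is literally the same as in the usco case.
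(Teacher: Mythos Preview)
Your proposal is correct and matches the paper's own argument exactly: the paper simply states that the proof is analogous to that of Proposition~\ref{tight} with the single change of replacing the family $\F=\{F^{a,b}_{\intr K}:K\in\K\}$ by $\G=\{G^{a,b}_{\intr K}:K\in\K\}$. Your write-up actually supplies the detail the paper omits---namely that $G^{a,b}_U$ agrees with $F^{a,b}_U$ off $\partial U$, so all Hausdorff-distance computations carry over verbatim---which is a welcome clarification.
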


Analogically to Corollary \ref{cor:metrizable} we have:

\begin{corollary}\label{MCcor}
 The following assertions are equivalent:
    \begin{enumerate}[\rm (1)]
      \item $MC(X,Y)$ is metrizable;
      \item $MC(X,Y)$ is first countable;
      \item $MC(X,Y)$ is Fréchet-Urysohn;
      \item $MC(X,Y)$ is sequential;
      \item $MC(X,Y)$ has countable tightness;
      \item $X$ is hemicompact.
    \end{enumerate}
\end{corollary}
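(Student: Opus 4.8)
The plan is to follow the same three-part strategy used in the proof of Corollary~\ref{cor:metrizable}, since every ingredient transfers verbatim to the cusco setting. First I would dispatch the chain $(1)\Rightarrow(2)\Rightarrow(3)\Rightarrow(4)\Rightarrow(5)$ by invoking the standard topological implications valid in any space: a metrizable space is first countable, first countability implies the Fréchet–Urysohn property, every Fréchet–Urysohn space is sequential, and every sequential space has countable tightness. None of these steps uses any special structure of $MC(X,Y)$.

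For $(5)\Rightarrow(6)$ I would appeal to the cusco tightness proposition proved immediately above, which asserts $t(MC(X,Y))=\cf(K(X),\subseteq)$. Countable tightness of $MC(X,Y)$ means $t(MC(X,Y))=\w$, whence $\cf(K(X),\subseteq)=\w$; by definition this says precisely that $X$ is hemicompact.

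For $(6)\Rightarrow(1)$ I would argue that hemicompactness yields a countable cofinal family $\K\subseteq K(X)$. As recorded in the proof of Proposition~\ref{tight}, the uniformity $\U_{uc}$ is generated by the sets $W[K,\frac{1}{n+1}]$ with $K$ ranging over $\K$ and $n\in\w$; since $\K$ is countable, this is a countable base for $\U_{uc}$. A Hausdorff uniform space whose uniformity admits a countable base is metrizable, and $MC(X,Y)$ is Hausdorff as a subspace of $(K(Y)^X,\tau_{uc})$; hence $MC(X,Y)$ is metrizable.

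The argument is essentially a transcription of the usco case, so there is no genuine obstacle; the only point requiring a moment's care is the final metrization step, where one must note that it is the Hausdorffness of the ambient space $(K(Y)^X,\tau_{uc})$ (equivalently, the separation of $\U_{uc}$) that upgrades ``countable uniformity base'' to full metrizability rather than mere pseudometrizability.
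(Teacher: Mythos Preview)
Your proposal is correct and follows exactly the paper's approach: the paper presents this corollary with the remark ``Analogically to Corollary~\ref{cor:metrizable} we have,'' and your three-part argument (trivial chain $(1)\Rightarrow\cdots\Rightarrow(5)$, then $(5)\Rightarrow(6)$ via the cusco tightness proposition, then $(6)\Rightarrow(1)$ via a countable base for $\U_{uc}$) is precisely that analogy written out. The added remark on Hausdorffness is a harmless clarification.
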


The proof of the following theorem is analogous to the proof of Theorem~\ref{thm:loc2count}, where instead of Theorem~\ref{thm:ccc} and Corollary~\ref{cor:metrizable}, one should use Theorem~\ref{thm:cccc} and Corollary~\ref{MCcor}, respectively. 

\begin{theorem}
Let $Y$ be separable. The following conditions are equivalent:
\begin{enumerate}[\rm (1)]
\item $MC(X,Y)$ is second-countable metrizable;
\item $MC(X,Y)$ is locally second-countable;
\item $X$ is countable discrete.
\end{enumerate}
\end{theorem}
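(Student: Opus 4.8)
The plan is to follow the scheme of Theorem~\ref{thm:loc2count}, replacing each usco ingredient by its cusco counterpart: Corollary~\ref{MCcor} in place of Corollary~\ref{cor:metrizable}, and Theorem~\ref{thm:cccc} in place of Theorem~\ref{thm:ccc}. The implication $(1)\Rightarrow(2)$ is immediate, since a second-countable space is in particular locally second-countable.

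For $(2)\Rightarrow(3)$, I would first observe that local second-countability implies both first countability and local ccc. Indeed, a second-countable subspace is itself first countable, so a point with a second-countable neighborhood has a countable neighborhood base in the whole space; and a second-countable space is ccc, so each such neighborhood is a ccc subspace. Applying Corollary~\ref{MCcor} to first countability yields that $X$ is hemicompact, and applying Theorem~\ref{thm:cccc} to local ccc yields that $X$ is discrete. Here we use that $Y$, being a non-trivial locally convex topological vector space, is automatically non-discrete, so the separability hypothesis suffices to invoke Theorem~\ref{thm:cccc}. Finally, a hemicompact discrete space has only finite compact subsets, so $K(X)$ consists of the finite subsets of $X$; a countable family cofinal under inclusion then has countable union equal to $X$, forcing $X$ to be countable.

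For $(3)\Rightarrow(1)$, separability of the metric space $Y$ gives that $Y$ is second-countable. When $X$ is discrete, every minimal cusco map is singleton-valued, since on an isolated point there is no continuity constraint and minimality shrinks each convex compact value to a point; hence $MC(X,Y)=Y^X$, the cusco analogue of Lemma~\ref{trivial}. Since $X$ is countable, $MC(X,Y)$ is a countable product of copies of the second-countable metrizable space $Y$, and is therefore itself second-countable and metrizable.

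I expect no genuine obstacle, as the argument is parallel to Theorem~\ref{thm:loc2count}; the only point requiring a moment's care is the identification $MC(X,Y)=Y^X$ for discrete $X$, which is the cusco version of Lemma~\ref{trivial} and is used (implicitly, through the reference to Theorem~\ref{thm:ccc}) already in the proof of Theorem~\ref{thm:cccc}.
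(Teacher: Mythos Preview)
Your proposal is correct and follows exactly the approach the paper itself takes: it explicitly states that the proof is analogous to Theorem~\ref{thm:loc2count}, replacing Theorem~\ref{thm:ccc} and Corollary~\ref{cor:metrizable} by Theorem~\ref{thm:cccc} and Corollary~\ref{MCcor}. Your added justifications (why hemicompact plus discrete forces countability, and why $MC(X,Y)=Y^X$ for discrete $X$) are correct elaborations of steps the paper leaves implicit.
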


%\textbf{Acknowledgements.}

%\bibliographystyle{abbrv}
%\bibliography{topology}

\begin{thebibliography}{10}

\bibitem{Beer1993}
G.~Beer.
\newblock {\em Topologies on Closed and Closed Convex Sets}.
\newblock Kluwer Academic Publishers, Dodrecht, 1993.

\bibitem{Borwein1991}
J.M. Borwein.
\newblock {\em Fixed point theory and applications}, volume 252 of Pitman
  Res. Notes Math. Ser., chapter {\em Minimal cuscos and subgradients of Lipschitz
  functions}, pages 57--81.
\newblock Longman, Harlow, 1991.

\bibitem{Borwein1997}
J.M. Borwein and W.B. Moors.
\newblock {\em Essentially smooth {Lipschitz} functions.}
\newblock J. Funct. Anal., 149 (1997) 305--351.

\bibitem{Borwein2004}
J.M. Borwein and Q.J. Zhu.
\newblock {\em Techniques of Variational Analysis}.
\newblock Springer, 2004.

\bibitem{Christensen1982}
J.P.R. Christensen.
\newblock {\em Theorems of {N}amioka and {R}. {E}. {J}ohnson type for upper semicontinuous and compact valued set-valued mappings}.
\newblock Proc. Amer. Math. Soc., 86 (4) (1982) 649--655.

\bibitem{Coban1989}
M.M. {\v{C}}oban, P.~S. Kenderov, and J.~P. Revalski.
\newblock {\em Generic well-posedness of optimization problems in topological
  spaces}.
\newblock Mathematika, 36 (1989) 310--324.

\bibitem{Fabian1997}
M.~Fabian.
\newblock {\em {G}{\^a}teaux differentiability of convex functions and topology: weak {A}splund spaces}, volume~18.
\newblock John Wiley \& Sons, 1997.

\bibitem{Fabian2013}
M.~Fabian, P.~Habala, P.~H{\'a}jek, V.M. Santaluc{\'\i}a, J.~Pelant, and
  V.~Zizler.
\newblock {\em Functional analysis and infinite-dimensional geometry}.
\newblock Springer Science \& Business Media, 2013.

\bibitem{Hammer1997}
S.T. Hammer and R.A. McCoy.
\newblock {\em Spaces of densely continuous forms}.
\newblock Set-Valued Analysis, 5 (3) (1997) 247--266.

\bibitem{Hola2009}
{\v{L}}.~Hol{\'a} and D.~Hol{\'y}.
\newblock {\em Minimal usco maps, densely continuous forms and upper semi-continuous
  functions.}
\newblock Rocky Mountain J. Math., 39 (2) (2009) 545--562.

\bibitem{Hola2013b}
{\v{L}}.~Hol{\'{a}} and D.~Hol{\'{y}}.
\newblock {\em Relations between minimal usco and minimal cusco maps}.
\newblock Portugal. Math., 70 (3) (2013) 211--224.

\bibitem{Hola2014}
{\v{L}}.~Hol{\'{a}} and D.~Hol{\'{y}}.
\newblock {\em New characterizations of minimal cusco maps}.
\newblock  Rocky Mountain J. Math., 44 (6) (2014) 1851--1866.

\bibitem{Hola2015b}
{\v{L}}.~Hol{\'a} and D.~Hol{\'{y}}.
\newblock {\em Minimal usco and minimal cusco maps}.
\newblock Khayyam J. Math., 1 (2) (2015) 125--150.

\bibitem{Hola2016}
{\v{L}}.~Hol{\'a} and D.~Hol{\'{y}}.
\newblock {\em Minimal usco and minimal cusco maps and compactness}.
\newblock  J. Math. Anal. Appl., 439 (2) (2016) 737--744.

\bibitem{Hola2021}
{\v{L}}.~Hol{\'{a}} and D.~Hol{\'{y}}.
\newblock {\em Minimal usco maps and cardinal invariants of the topology of uniform
  convergence on compacta}.
\newblock  RACSAM, 116 (2022) 27.

\bibitem{Hola2021a}
{\v{L}}.~Hol{\'a}, D.~Hol{\'y}, and W.~B. Moors.
\newblock {\em {USCO} and {Q}uasicontinuous {M}appings}, Berlin, Boston: De Gruyter, 2021.

\bibitem{Hola2023}
L.~Hol{\'{a}}, D.~Hol{\'{y}}, and B.~Novotn{\'{y}}.
\newblock {\em Spaces of~minimal usco and~minimal cusco maps as~Fr{\'{e}}chet
  topological vector spaces}.
\newblock In {\em Advances in Topology and Their Interdisciplinary
  Applications}, pages 1--18. Springer Nature Singapore, 2023.

\bibitem{Hola2008}
{\v{L}}.~Hol{\'{a}}, T.~Jain, and R.A. McCoy.
\newblock {\em Topological properties of the multifunction space {L(X)} of cusco
  maps}.
\newblock Math. Slovaca, 58 (6) (2008) 763--780.

\bibitem{Hola2007b}
{\v{L}}.~Hol{\'{a}} and R.A. McCoy.
\newblock {\em Cardinal invariants of the topology of uniform convergence on compact
  sets on the space of minimal usco maps}.
\newblock Rocky Mountain J. Math., 37 (1) (2007) 229--246.

\bibitem{Hola2020}
{\v{L}}.~Hol{\'{a}} and B.~Novotn{\'{y}}.
\newblock {\em When is the space of minimal usco/cusco maps a topological vector
  space}.
\newblock J. Math. Anal. Appl., 489 (1) (2020) 124125.

\bibitem{Hola2022a}
{\v{L}}.~Hol{\'{a}} and B.~Novotn{\'{y}}.
\newblock {\em Fr{\'{e}}chet subspaces of minimal usco and minimal cusco maps}.
\newblock Bull. Belg. Math. Soc. Simon Stevin,
  29 (5) (2022) 683--701.

\bibitem{Kalenda1999}
O.~Kalenda.
\newblock {\em Stegall compact spaces which are not fragmentable}.
\newblock Topology Appl., 96 (2) (1999) 121--132.

\bibitem{Kenderov1993}
P.S. Kenderov and J.P. Revalski.
\newblock {\em The {B}anach-{M}azur game and generic existence of solutions to
  optimization problems}.
\newblock Proc. Amer. Math. Soc.,
  118 (3) (1993) 911--917.

\bibitem{Kunen2014}
K.~Kunen.
\newblock {\em Set theory an introduction to independence proofs}.
\newblock Elsevier, 2014.

\bibitem{Lechicki1990}
A.~Lechicki and S.~Levi.
\newblock {\em Extensions of semicontinuous multifunctions}.
\newblock  Forum Math., 2 (2) (1990) 341--360.

\bibitem{Moors1995}
W.~Moors.
\newblock {\em A characterization of minimal subdifferential mappings of locally
  {Lipschitz} functions}.
\newblock Set-Valued Anal., 3 (1995) 129--141.

\bibitem{Moors2002}
W.B. Moors and S.~Somasundaram.
\newblock {\em Usco selections of densely defined set-valued mappings}.
\newblock Bull. Aust. Math. Soc.,
  65 (2) (2002) 307--313.

\bibitem{Moors2006}
W.B. Moors and S.~Somasundaram.
\newblock {\em A {G}{\^{a}}teaux differentiability space that is not weak {A}splund}.
\newblock  Proc. Amer. Math. Soc., 134 (9) (2006) 2745--2754.

\bibitem{Neubrunn1988}
T.~Neubrunn.
\newblock {\em Quasi-continuity}.
\newblock Real Anal. Exchange, 14 (1988) 259--306.

\bibitem{Preiss1990}
D.~Preiss, R.R. Phelps, and I.~Namioka.
\newblock {\em Smooth {B}anach spaces, weak {A}splund spaces and monotone or usco mappings}.
\newblock Israel J. Math., 72(3) (1990) 257--279.

\bibitem{Stegall1983}
C.~Stegall.
\newblock {\em A class of topological spaces and differentiation of functions on
  Banach spaces}.
\newblock {\em Vorlesungen aus dem Fachbereich Mathematik der Universität
  Essen}, 10 (1983) 63--77.

\bibitem{Zajicek2012}
L.~Zaj{\'{i}}{\v{c}}ek.
\newblock {\em Generic {F}r{\'{e}}chet differentiability on {A}splund spaces via
      {AE} strict differentiability on many lines}.
\newblock J. Convex Anal., 19(1) (2012) 23--48.

\end{thebibliography}

\end{document}